\newtheorem{theorem}{Theorem}[section]
\newtheorem{lemma}[theorem]{Lemma}
\newtheorem{proposition}[theorem]{Proposition}
\theoremstyle{remark}
\newtheorem{remark}[theorem]{Remark}
\newtheorem{example}[theorem]{Example}
\numberwithin{equation}{section}
\numberwithin{figure}{section}
\newcommand{\R}{\mathbb{R}}
\newcommand{\N}{\mathbb{N}}
\def\Reach{\mathrm{Reach}\,}
\def\gammalim{\operatornamewithlimits{\Gamma-lim}}
\def\gammalimsup{\operatornamewithlimits{\Gamma-lim\,sup}}
\def\gammaliminf{\operatornamewithlimits{\Gamma-lim\,inf}}
\begin{document}
	\author{Nikita Puchkin}
	\address{HSE University
	\and Institute for Information Transmission Problems, Moscow, Russian Federation
	}
	\email{npuchkin@hse.ru}

	\author{Vladimir Spokoiny}
	\address{
	Weierstrass Institute of Applied Analysis and Stochastics
	\and
	Humboldt University, Berlin, Germany
	\and HSE University
	\and Institute for Information Transmission Problems, Moscow, Russian Federation
	}
	\email{spokoiny@wias-berlin.de}

	\author{Eugene Stepanov}
	\address{St.Petersburg Branch of the Steklov Mathematical Institute of the Russian Academy of Sciences,
	Fontanka 27, 191023 St.Petersburg, Russian Federation
	\and
	Scuola Normale Superiore, Pisa, Italy
	\and
	HSE University, Moscow, Russian Federation		
	}
	\email{stepanov.eugene@gmail.com}
	
	\author{Dario Trevisan}
	\address{Dario Trevisan, Dipartimento di Matematica, Universit\`a di Pisa \\
	Largo Bruno Pontecorvo 5 \\ I-56127, Pisa}
	\email{dario.trevisan@unipi.it}
	
	\thanks{The publication was supported by the grant for research centers in the field of AI provided by the Analytical Center for the Government of the Russian Federation (ACRF) in accordance with the agreement on the provision of subsidies (identifier of the agreement 000000D730321P5Q0002) and the agreement with HSE University \textnumero70-2021-00139.}
	\date{\today}
	
	\title[Reconstruction of manifold embeddings via intrinsic distances]{Reconstruction of manifold embeddings into Euclidean spaces via intrinsic distances}
	
	\begin{abstract}
		We consider the problem of reconstructing  
		an embedding of a compact connected Riemannian manifold in a Euclidean space up to an almost isometry,
		given the information
		on intrinsic distances between points from its ``sufficiently large'' subset. This is one of the classical manifold learning problems.
		It happens that the most popular
		methods to deal with such a problem, with a long history in data science, namely, the classical Multidimensional scaling (MDS) and the Maximum variance unfolding (MVU), actually miss the point and may provide results very far from an isometry; moreover, they may even give no bi-Lipshitz embedding. We will provide an easy variational formulation of this problem, which leads to an algorithm always providing an almost isometric embedding with the distortion of original distances as small as desired (the parameter regulating the upper bound for the desired distortion is an input parameter of this algorithm).   
	\end{abstract}
	 
	\maketitle
		
\section{Introduction}
	Let $M$ be a smooth, connected, compact Riemannian manifold endowed with its intrinsic (geodesic) 
	distance $d_M$. Further, we will further consider $M$ to be embedded in some Euclidean space $\R^n$.
	Assume that we are given a sample $\{d_{ij}\}$ of pairwise distances between points of some point cloud
	$\{y_i\}\subset M$, i.e. $d_{ij}:= d_M(y_i, y_j)$. Our goal is to reconstruct an almost isometric embedding of $M$,
	or just of its subset  $\{y_i\}$, into  $\R^n$ based on the observed sample.
	In other words, we are interested in an algorithm, which, based on the input $\{d_{ij} \}$, produces a set $\{x_i\} \subset \Sigma$, with $\Sigma \subset \R^n$ being some other embedded manifold endowed with its intrinsic distance $d_\Sigma$, so that $d_\Sigma(x_i, x_j) \approx d_{ij}$,
	where the approximate inequality means that the distortion does not exceed a desired level.
	Note that in data science applications the set $\{y_i\}\subset M$ is of course finite, though its cardinality $N\in \N$ is usually quite large.
	
\subsection*{Existing results and methods}
There is a vast literature both in statistics and computational geometry 
on manifold reconstruction. The majority of existing methods are based directly on the finite point cloud $\{y_i\}_{i = 1}^N$, which is assumed to be known up to some errors.
 In particular, in~\cite{gppvw12, bg14, mms16, al18, fikln18, filn19, tsay19, ps22}, the authors consider the problem of $C^2$ manifold reconstruction based on a finite sample possibly corrupted with small zero-mean additive noise. In applications, one usually employs the respective methods to reduce the dimensionality of the known high-dimensional data. Note that this setup is much simpler than the one we consider, where, instead of the point cloud itself, we have only information on the respective distance matrix. Access to the point cloud allows to construct estimates of projectors onto tangent spaces to the manifold and then use them to reconstruct the manifold itself. For instance,~\cite{bg14} and~\cite{al18} used tangential Delaunay complexes. The approach of~\cite{mms16, tsay19} relied on local PCA estimates. In~\cite{ps22}, one iteratively uses a PCA-like procedure to successively improve projector estimates. In~\cite{fikln18, filn19}, the so-called putative manifold is used, that is a set of points solving a nonlinear system of equations.
Another class of manifold reconstruction methods from a point cloud is based on  random projections similarly to the classical Johnson-Lindenstrauss lemma (see, e.g.,~\cite{baraniuk07,hegde07,clarkson08}). 
Finally, in~\cite{al19, sober20} one studies the case when $M$ is a $C^m$ submanifold of $\R^n$: the authors used minimizers of a weighed sum of square errors to estimate not only the projectors but also higher order tensors up to order $m$. As a result, the guarantees on the Hausdorff distance between $M$ and the constructed estimate are much stronger in this case than those for the methods using only the first-order expansions.

The problem we are considering, when only pairwise distances $\{d_{ij} \}_{i, j = 1}^N$ are given, is somewhat less studied. It is worth mentioning, though, that the two problems of manifold reconstruction, the one directly from the point cloud and the other from just a distance matrix, are inherently related. In fact, many methods to solve the former actually contain, as a core part, some method to solve the latter.  As an example, the Isomap manifold embedding  algorithm~\cite{tsl00}, often used in applications for the purpose of data dimension reduction, contains as a core the classical multidimensional scaling (MDS) algorithm that deals only with distance matrices. 

Note that we are interested in reconstructing the embedding of the original manifold $M$ into an Euclidean space (e.g., for the purpose of data visualization), as opposed to the problem of reconstructing an abstract manifold $M$ (e.g., determined by its metric tensor). The latter is solved in~\cite{fefferman-Ivanov-2019reconstrmanif}, but its solution does not provide any explicit finite-dimensional embedding. Of course, once the metric tensor is reconstructed, one might also reconstruct an embedding by, say, some computational version of the Nash embedding theorem, but such a double-step procedure is unreasonably complicated.
Therefore, it is prompting to search for a direct algorithm to solve the posed problem. Such algorithms have already been proposed and are quite widely used in applications.
However, we will show that two basic and widely used algorithms, \textit{multidimensional scaling} (MDS) and \textit{maximum variance unfolding} (MVU), may infinitely distort the original distances even in simple situations. 
As a consequence, the methods relying on the classical MDS (e.g., Isomap \cite{tsl00}) may inherit such an undesired property. 
Note that some newer heuristic methods of dimension reduction 
with steadily growing popularity, like SNE or t-SNE~\cite{HintonRoweis-sne02,MaatenHinton-tsne08}, also perform 
reconstruction of data points just from the distances. Unfortunately, they do not have any rigorous guarantees on the distortion of pairwise distances (and, anyhow, it is clear one might expect at most some bounds on distance distortion ``in average'', but not uniform).
An attempt to understand t-SNE was made in a recent work~\cite{arora2018tsne-analysis}, but the authors only managed to show that t-SNE is able to keep the cluster structure in the data. Finally, the method proposed in~\cite{boissonnat17} also requires just a distance matrix as an input and reconstructs a manifold homeomorphic to the original one, but there are no upper bounds on the distance distortion for this method or its modifications in the literature.

\subsection*{Our contribution} On the contrary, in the present paper, we suggest a quite simple direct algorithm performing a manifold embedding in polynomial time and provide non-asymptotic upper bounds on the relative distortion of pairwise distances that are as small as desired (the requirement for the smallness of distortion is itself an input datum). It is also worth mentioning that, in fact, the algorithm we provide works not only with smooth Riemannian manifolds but rather with a far more general class of compact subsets of a Euclidean space connected by rectifiable arcs and satisfying some curvature estimate (e.g., having a positive reach); this estimate (or the lower bound for the reach) and the intrinsic diameter of the set have to be a priori known as they are also input parameters of the algorithm. 

\subsection*{Plan of the paper} The rest of the paper is organized as follows. 
Section~\ref{sec_MDSMVU} is dedicated to the analysis of MDS and MVU. In particular, relying on the result from~\cite{adams-blumstein-kassab2020MDS}, we show that MDS, applied to a unit circumference, produces a snowflake-like closed curve which is just H\"{o}lder continuous, and hence,  infinitely distorts the original distances. Our algorithm will be provided in Section~\ref{ssec_SDP1}. It is based on a semidefinite programming problem, and, consequently, runs in a polynomial time. The analysis of our approach is based on a variational setting proposed in Section~\ref{sec_varDistreconstr} and on a simple $\Gamma$-convergence result  (Theorem~\ref{th_Vdistest1}). As an application in Section~\ref{sec_Cech}, we show that this method can be used also for topological data reconstruction, i.e.\ for computing \v{C}ech cohomologies, and provide explicit estimates on the input parameters for this purpose. Finally, in Section~\ref{sec_numericsDistrreconstr} we provide some numerical experiments to illustrate the performance of the proposed algorithm.

\section{Notation and preliminaries}

For a metric space $E$ equipped with distance $d$ and a curve $\theta\colon [0,1]\to E$ 
we denote by $|\dot\theta|$ its \emph{metric derivative} and by
\[
	\ell(\theta):=\int_0^1 |\dot\theta|(t)\, dt 
\]
its \emph{parametric length}. The notation $B_r(x)\subset E$ stands for the open ball of radius $r>0$ with center $x\in E$.
The Euclidean norm is denoted by $|\cdot|$.

For a set $M\subset\R^n$ and $\varepsilon>0$ let $(M)_\varepsilon\subset\R^n$ to be its open $\varepsilon$-neighborhood, i.e.\
$(M)_\varepsilon:=\cup_{x\in N} B_\varepsilon(x)$. We recall the notion of \textit{reach} of $M$ introduced by Federer in~\cite{federer1959curvmeas}
and defined by
\[
\Reach(M):=\sup \left\{\varepsilon>0\colon \mbox{every point of $(M)_\varepsilon$ has a unique projection on $M$} \right\}.
\]

We further assume that the function space $C(M;\R^n)$ of continuous functions on $M$ with values in $\R^n$ is equipped with the usual unifom norm.
For a set $S\subset C(M;\R^n)$ we denote
\[
	\chi_{S} (f) :=
	\left\lbrace
	\begin{array}{rl}
		0, & f\in S,\\
		+\infty, & f\not\in S. 
	\end{array}
	\right.   
\] 
For an $m\times n$ matrix $X$ we denote as usual by $X^T$ its transpose. Vectors are silently identified with columns. 
The notation  $\mathrm{diag}\, (\alpha_1,\ldots, \alpha_r)$ stands for the diagonal matrix with entries
$(\alpha_1,\ldots, \alpha_r)$ over the diagonal. By $x\cdot y$ we denote the usual scalar product of vectors $x\in \R^n$ and $y\in \R^n$.
For any real numbers $a$ and $b$ the notation $a \wedge b$ stands for $\min\{a, b\}$.

For the general theory of $\Gamma$-convergence we refer the reader to~\cite{bra06-Gammaconv}, 
wherefrom we borrow also the respective notation.

\section{Main existing methods}\label{sec_MDSMVU}

The existing algorithms in manifold learning aimed at manifold reconstruction from intrinsic distances are quite numerous, but many of them are very closely related to 
 just two basic ones, \textit{multidimensional scaling} (MDS) and \textit{maximum variance unfolding} (MVU), which are aimed at reconstructing the locations of the points $y_j$ up to an isometric (or almost isometric) embedding.
This would be the case if the algorithm with an input $\{d_{ij}\}_{i,j=1}^N$, produced a set of points $\{x_i^N\}_{i=1}^N\subset \R^n$ such that the functions $f_N$, defined as $f_N(y_i):=x_i^N$, tend to some $f\colon M\to \R^n$ with $\Sigma:=f(M)$ (almost) isometric to $M$, when $N\to\infty$. 
Unfortunately, as we show below, the existing methods in general miss this point.

\subsection{Multidimensional scaling (MDS)}
The classical \textit{multidimensional scaling} (MDS) introduced by Torgerson and further developed by many authors (see chapter~6 of~\cite{wang2012geometric} and references therein),
has been formulated for the situation when the distance $d_M$ is Euclidean (which happens, e.g., when $M$ is a convex subset of $\R^m$).
In practice however MDS method is quite frequently applied when the distance $d_M$ is not necessarily Euclidean.
This however in general does not allow to reconstruct the embedding of the original manifold $M$ 
up to an (almost) isometry, as the following example shows.
  
\begin{example}\label{ex_LKassabS1}
	We follow the calculations from~\cite{adams-blumstein-kassab2020MDS} of the MDS embedding of the finite uniform samples of the unit circumference $M$.
Namely,  
if $\{y_i\}_{i=1}^N$ is a set of equally spaced points in $M$, then
proposition~7.2.6 of~\cite{adams-blumstein-kassab2020MDS} shows that
the MDS embedding of $\{y_i\}_{i=1}^N$
in $\R^n$ lies, up to a rigid motion, on the closed curve 
$\gamma_N\colon [0,2\pi]\to \R^n$ defined by
\begin{align*}
\gamma_N &(t) := \\
& (a_1^N \cos(t), a_1^N \sin(t), \ldots, a_{2k+1}^N  \cos((2k+1)t), a_{2k+1}^N \sin((2k+1)t), \ldots) \in\R^n,
\end{align*}
where $\lim_N a_j^N =a_j:=\sqrt{2}/j$ (with $j$ odd). Clearly, in the limit $N\to\infty$ and $n\to \infty$ this gives a
closed snowflake-like curve $\gamma$ homeomorphic but not isometric (nor even bilipschitz) to $M$; in fact, 
\begin{equation}\label{eq_snowflakeMDS1}
|\gamma(t)-\gamma(s)|=2\sqrt{\pi} |t-s|^{1/2}.
\end{equation}
To prove~\eqref{eq_snowflakeMDS1}, we calculate
\begin{equation}\label{eq_snowflakeMDS2}
\begin{aligned}
|\gamma(t)-\gamma(s)|^2 &= \sum_{k=0}^\infty\frac{4}{(2k+1)^2} \left(\sin (2k+1)t-\sin (2k+1)s\right)^2 \\
& \qquad + \sum_{k=0}^\infty\frac{4}{(2k+1)^2} \left(\cos (2k+1)t-\cos (2k+1)s\right)^2  \\
&=16\sum_{k=0}^\infty \frac{1}{(2k+1)^2} -  16\sum_{k=0}^\infty \frac{\cos\left((2k+1)(t-s)\right)  }{(2k+1)^2}\\
&= 2\pi^2 - 16 \sum_{k=0}^\infty \frac{\cos\left((2k+1)(t-s)\right)}{(2k+1)^2}.
\end{aligned}
\end{equation}
But from the Fourier expansion
\begin{align*}
|x|= \frac{\pi}{2} - \frac{4}{\pi} \sum_{k=0}^\infty \frac{\cos (2k+1)x}{(2k+1)^2}
\end{align*}
for $x\in [-\pi,\pi]$, we get
\begin{align*}
\sum_{k=0}^\infty \frac{\cos(2k+1)(t-s)}{(2k+1)^2} = \frac{\pi^2}{8}-\frac{\pi|t-s|}{4}, 
\end{align*}
which plugged in~\eqref{eq_snowflakeMDS2} gives~\eqref{eq_snowflakeMDS1} as claimed.
\end{example}

Deeper results on what is in fact reconstructed by applying the MDS to a generic metric measure space, together with further examples like MDS on a multidimensional sphere and on a flat torus, can be found in~\cite{KroshSteTrev22-mds} and also in~\cite{LimMemoli2022-mds}.

\subsection{Maximum variance unfolding (MVU)}

The method of \textit{maximum variance unfolding} (MVU) (alternatively called also \textit{semidefinite embedding} (SDE)), has been introduced by Weinberger
and Saul, see chapter~9.1 of~\cite{wang2012geometric}, and amounts to finding the points $x_i^N\in \R^n$, $i=1,\ldots, N$
given the distance matrix $\{d_{ij}\}_{i,j=1}^N$, by maximizing the total variance functional
\[
\mathrm{var}(x_{1},x_{2},...,x_{N}):=\sum _{i,j=1}^N |x_{i}-x_{j}|^{2}
\]
subject to the set of constraints
\begin{equation}\label{eq_MVUconstr1}
|x_{i}-x_{j}|^{2} = d_{ij}^2 \quad\mbox{whenever $y_i$ is close to $y_j$}.
\end{equation}
The condition of ``$y_i$ close to $y_j$'' is understood differently in different versions of MVU, but most commonly as
$d_{ij}\leq \varepsilon$ for some fixed $\varepsilon>0$, so that~\eqref{eq_MVUconstr1} becomes
\begin{equation}\label{eq_MVUconstr2}
|x_{i}-x_{j}|^{2} = d_{ij}^2 \quad\mbox{whenever $d_{ij}\leq \varepsilon$}.
\end{equation}
The constraints~\eqref{eq_MVUconstr1} (or in particular~\eqref{eq_MVUconstr2}) are reformulated in an equivalent way in terms of
the Gram matrix $K$ with entries $K_{ij}:= x_i\cdot x_j$ so that
the above maximization becomes \textit{semidefinite programming} problem.

Similarly to Example~\ref{ex_LKassabS1} it is easy to show that MVU, and even more, any method trying to preserve locally the distances as Euclidean ones, in general not only does not allow to reconstruct the embedding of the original manifold $M$ up to an (almost) isometry, but even worse, the constraints~\eqref{eq_MVUconstr2} may not allow to reconstruct 
even something vaguely similar to $M$, as the example below shows.

\begin{example}\label{ex_MVUS1}
	Taking again $M$ to be a unit circumference $S^1$, and $\{y_i\}_{i=1}^N$ to be a set of equally spaced points in $M$,
	suppose that $\{x_i^N\}_{i=1}^N\subset\R^n$ satisfy~\eqref{eq_MVUconstr2}, i.e\ 
\[
|x_{i}^N-x_{j}^N|^{2} = d_{ij}^2 \quad\mbox{whenever $d_{ij}\leq \varepsilon$}
\]	
for some fixed $\varepsilon>0$, and that continuous functions
$f_N\colon M\to\R^n$ satisfying $f_N(y_j)=x_j^N$ for all $j=1,\ldots, N$ 
converge as $N\to \infty$ to some continuous function
$f\colon M\to\R^n$.
Then one has
\[
|f(u)-f(v)|= d_M(u,v) \quad\mbox{whenever $d_M(u,v) < \varepsilon$}.
\]
Parameterizing $M$ in a natural way over $[0,2\pi]$ by a curve $\theta\colon [0,2\pi]\to M$, $\theta(t):=(\cos t,\sin t)$,
for the curve $\gamma\colon [0,2\pi]\to \R^n$ defined by $\gamma(t):= f(\theta(t))$ we have therefore that
\[
|\gamma(t)-\gamma(s)|= |t-s|
\]
whenever $|t-s|< \varepsilon$. Thus $\gamma(I)$ is a line segment for every interval $I\subset [0,2\pi]$ of length $\ell(I)<2\pi$,
which implies that $\gamma([0,2\pi))$ is a nondegenerate line segment. But on the other hand one must have
$\gamma(0)=\gamma(2\pi)$, which is a contradiction.
\end{example}

\section{Variational setting}\label{sec_varDistreconstr}

From now on we assume $M\subset\R^n$ to be a 
compact set connected by rectifiable arcs and equipped with the geodesic distance
\[
d_M(u,v):=\inf\left\{ \ell(\theta)\colon \theta\colon [0,1]\to M, \theta(0)=u, \theta(1)=v\right\},
\] 
where $\ell(\theta)$ denotes the Euclidean length of $\theta$.
Let $\Sigma_k\subset M$ be a sequence of closed sets. 

Given an $\varepsilon>0$ and a $k\in \N$, we define the functionals
\[
F_{\varepsilon, k}\colon C(M;\R^n)\to \R, F_\varepsilon\colon C(M;\R^n)\to \R, 
\]
by the formulae
\begin{equation}\label{eq_defFeps}
\begin{aligned}
F_{\varepsilon, k}(f) &:=\sup\left\lbrace  \left| \dfrac{|f(x)-f(y)|^2}{d_M^2(x,y)} -1 \right| \colon \{x,y\}\subset \Sigma_k, 
0<d_M(x,y)\leq \varepsilon 
\right\rbrace. \\
F_{\varepsilon}(f) &:=\sup\left\lbrace  \left| \dfrac{|f(x)-f(y)|^2}{d_M^2(x,y)} -1 \right| \colon \{x,y\}\subset M, 
0<d_M(x,y)\leq \varepsilon 
\right\rbrace. 
\end{aligned}
\end{equation}

The scope of this section is to prove the following easy result.

\begin{theorem}\label{th_Vdistest1}
	Let $M\subset \R^n$ be compact and connected by rectifiable arcs,
	and there exist $\varepsilon_0>0$, $C_1>0$
	such that
	\begin{equation}
	\label{eq_curv2}
     d_M(x,y) \leq |x-y| + C_1 |x-y|^2 
	\end{equation}	
	for all $(x,y)\in M\times M$ satisfying $d_M(x,y)\leq \varepsilon_0$.
	Denote
	\begin{equation}
\label{eq_barC2}
	\bar C_2:=\inf\left\{
	 \frac{|x-y|}{d_M(x,y)}\colon (x,y)\in M\times M, x\neq y
	 \right\}.
	\end{equation}
For an $x_0\in M$, $\Sigma_k\subset M$ a sequence of closed sets 
	satisfying
	$\Sigma_k\to M$ as $k\to\infty$ in the sense of the Hausdorff distance, and a $C_2\in (0, \bar C_2]$ set 
\begin{align*}
	\mathcal{C}_k &:= \left\lbrace f\in C(M;\R^n), f(x_0)=0, |f(x)-f(y)|\geq C_2 d_M(x,y)\, \mbox{for all}\, \{x,y\}\subset \Sigma_k \right\rbrace,\\
	\mathcal{C} &:= \left\lbrace f\in C(M;\R^n), f(x_0)=0, |f(x)-f(y)|\geq C_2 d_M(x,y)\, \mbox{for all}\, \{x,y\}\subset M \right\rbrace.
\end{align*}	Then the following asertions hold true.
\begin{itemize}
	\item[(i)] 
The variational problems
	\[
	\min \left\lbrace F_{\varepsilon,k}(f)\colon f\in \mathcal{C}_k \right\rbrace 
	\eqno{(P_k)}
	\]	
	have solutions for all $k\geq \bar k$, where $\bar k \in \N$ depends only on $\varepsilon$,  
	\item[(ii)]  If $f_k$ is a solution to $(P_k)$, then there is a subsequence of $\{f_k\}$ (not relabeled)
	such that $\lim_k f_k=f$ in the sense of uniform convergence, where $f$ solves
	\[
	\min \left\lbrace F_{\varepsilon}(f)\colon f\in \mathcal{C} \right\rbrace. 
	\eqno{(P)}
	\]	
	Note that if $f=\lim_k f_k$, then $f_k(M)\to \Sigma$, where $\Sigma:= f(M)$, in the sense of Hausdorff distance as $k\to +\infty$.
\item[(iii)]
    Moreover, every $f$ solving $(P)$ with $\varepsilon<\varepsilon_0$ 
    satisfies
	\begin{equation}\label{eq_Vlocdistest1sig1}
	d_M(x,y)(1-2C_1\varepsilon)\leq |f(x)-f(y)|\leq d_M(x,y)(1+2C_1\varepsilon), 
	\end{equation}
	if $d_M(x,y)\leq \varepsilon$, and 
	\begin{equation}\label{eq_Vdistest1sig2c}
	d_M (x,y)(1-2C_1\varepsilon)\leq d_\Sigma(f(x),f(y))\leq d_M (x,y)(1+2C_1\varepsilon)
	\end{equation}
	for all  $(x,y)\in M\times M$,
	where $d_\Sigma$ stands for the geodesic distance in $\Sigma$, i.e.\
	\[
	d_\Sigma(u,v):=\inf\left\{ \ell(\theta)\colon \theta\colon [0,1]\to \Sigma, \theta(0)=u, \theta(1)=v\right\}.
	\] 
\end{itemize}
\end{theorem}

Before proving the above theorem, we make a series of remarks.

\begin{remark}\label{rm_C2est1}
Under conditions of the above Theorem~\ref{th_Vdistest1} 
one has necessarily 
\[
\bar C_2\geq\frac{1}{1+ C_1\varepsilon_0}\wedge \frac{\varepsilon_0}{D} >0,
\]
where $D$ stands for the intrinsic diameter of $M$. 
In fact, since 
$|x-y|\leq d_M(x,y)$, the estimate~\eqref{eq_curv2} ensures 
	\begin{equation*}
\label{eq_curv3}
d_M(x,y) \leq (1+ C_1\varepsilon_0)|x-y|, 
\end{equation*}	
hence 
\[
\frac{|x-y|}{d_M(x,y)} \geq \frac{1}{1+ C_1\varepsilon_0}
\]
for all $(x,y)\in M\times M$ such that $d_M(x,y)<\varepsilon_0$, and 
\[
	 \frac{|x-y|}{d_M(x,y)} \geq \frac{\varepsilon_0}{D}
\]
for all $(x,y)\in M\times M$ such that $d_M(x,y)\geq \varepsilon_0$.
\end{remark}

\begin{remark}\label{rm_reach1}
The conditions on $M$ of the above Theorem~\ref{th_Vdistest1} are automatically satisfied if $M\subset \R^n$ is a $C^{1,1}$ smooth compact Riemannian
submanifold. The condition~\eqref{eq_curv2} can be seen then as a bound on curvatures of $M$. In particular, in this case
$\alpha :=\mbox{Reach}\, M>0$, and therefore~\eqref{eq_curv2} is satisfied according to  Lemma~\ref{lm_locdistest1} for
$\varepsilon_0:=\alpha$, with $C_1$ as in this Lemma. Moreover, in this case the constant $\bar C_2$ may be estimated in terms of $\alpha$ and the intrinsic diameter of $M$ according to Lemma~\ref{lm_locdistest2}. 
\end{remark}

\begin{remark}\label{rm_lowest1}
	Clearly, problem $(P)$ as well as approximating problems $(P_k)$ have many solutions. This is in the very nature of the problem
	statement: the given data are just intrinsic distances (which themselves do not contain any information on the embedding) and only very weak
	structural information on the embedding given by the constants $C_1$ and $C_2$ (and also by $\varepsilon_0$), so that if $M$ is, say,  a unit line segment, among solutions to $(P)$ there are infinitely many other embeddings of $M$ in a given Euclidean space as curves of unit length. However, they must be ``twisted not too much'', since 
any map $f$ solving $(P)$ is required to satisfy
	\begin{equation}\label{eq_locdistest1sig1_isom2}
	|f(x)- f(y)| \geq C_2d_M(x,y)
	\end{equation}
	for all $(x, y)\subset M\times M$. This, in particular, yields that the Euclidean diameter of $\Sigma:=f(M)$  cannot be arbitrarily small and thus excludes ``pathological'' embeddings like those provided by the Nash-Kuiper theorem.
	The fact that one requests the information on the structural constant $C_2$ to be retained by the embedding $f$ solving~$(P)$ via the requirement~\eqref{eq_locdistest1sig1_isom2}, besides avoiding such pathologies, is also used to force the injectivity of $f$, which is in a certain sense unavoidable (see Remark~\ref{rem-injectf1}). On the other hand, we do not force the embedding $f$ to satisfy the curvature-type estimate~\eqref{eq_curv2}. The reason is that in this way we are able to obtain a particularly simple algorithm to solve the approximating problems~$(P_k)$ (and hence to approximate embeddings  solving~$(P)$) based on solving a semidefinite programming problem. One might of course request more from the embedding a priori, but this would result in introducing  more constraints in the optimization problems and hence to substantially more complicated algorithms.
\end{remark}

\begin{proof}[Proof of Theorem~\ref{th_Vdistest1}]
	The proof will be divided into several steps.

{\em Step 1}. We first show that the sublevels of functionals  $F_{\varepsilon,k}$ are equicompact, that is, 
there is some  $\bar k\in \N$ depending only on $\varepsilon$ such that
the set 
\[
D_{K\varepsilon}:=\bigcup_{k\geq \bar k}\{f\in\mathcal{C}\colon F_{\varepsilon,k}(f)\leq K\} \subset C(M;\R^n)
\]
is compact for every $K>0$. In fact, by Lemma~\ref{lm_VequiLip1} for every $\varepsilon>0$ there is a $\bar k\in \N$ depending only on $\varepsilon$ such that for every $k\geq \bar k$ all $f\in C(M;\R^n)$ satisfying $F_{\varepsilon,k}(f)\leq K$ have equibounded Lipschitz constants over $\Sigma_k$. Hence, up to redefining each $f$ over $M\setminus \Sigma_k$ as an extension  from $\Sigma_k$ to $M$ with minimum Lipschitz constant, one has that
	the Lipschitz constants $\mathrm{Lip}\, f$ over $M$ are equibounded for all such $f$. Therefore, 
	the set $D_{K,\varepsilon}$
	is compact by Ascoli-Arzel\`{a} theorem\footnote{We retain the Italian tradition of ordering the manes of the authors of this famous theorem. In fact, it seems that it was discovered first by G.~Ascoli and later generalized in a separate work by C.~Arzel\`{a}.}
	as claimed.

{\em Step 2}. Note that the classes $\mathcal{C}\subset \mathcal{C}_k\subset C(M;\R^n)$ are closed. 
Since each functional $ F_{\varepsilon,k}$ is lower semicontinuous (as a supremum of a family of continuous functionals), the claim~(i) of the theorem being proven
(i.e.\ existence of solutions to problems~$(P_k)$ follows from compactnesss of sublevels of each $ F_{\varepsilon,k}$ with $k\in \N$ sufficiently large (depending only on $\varepsilon$).

{\em Step 3}.	Denote now
\begin{align*}
\hat{F}_{\varepsilon,k} (f) :=\left\{ 
\begin{array}{rl}
F_{\varepsilon,k} (f), & f\in \mathcal{C}_k,\\
+\infty, &\mbox{otherwise},	
\end{array}
\right. \qquad  	  
\hat{F}_{\varepsilon} (f) :=\left\{ 
\begin{array}{rl}
F_{\varepsilon} (f), & f\in \mathcal{C},\\
+\infty, &\mbox{otherwise}.	
\end{array}
\right.  
\end{align*}
Since the classes $\mathcal{C}\subset C(M;\R^n)$ and $\mathcal{C}_k\supset \mathcal{C}$ are closed, then the functionals $\hat{F}_{\varepsilon,k}$ also have equicompact sublevels.	

Observe that the equality $f=\lim_k f_k$, where $f_k\in \mathcal{C}_k$, yields that $f\in \mathcal{C}$.
In fact, for every $\{x,y\}\subset M$ there are $\{x_k,y_k\}\subset \Sigma_k$ such that 
$\lim_k x_k =x$, $\lim_k y_k =y$. Thus,
\[
|f(x)-f(y)|=\lim_k |f_k(x_k)-f_k(y_k)|\geq C_2 \lim_k  d_M(x_k, y_k)  = C_2 d_M(x, y),
\] 
showing that $f\in \mathcal{C}$.
Thus by Lemma~\ref{lm_VGammaconv1} one has
that $\gammalim_k \hat{F}_{\varepsilon, k}=\hat{F}_{\varepsilon}$.
Let now for each sufficiently large $k\in \N$ the function $f_k\in \mathcal{C}_k$ stand for a minimizer of $F_{\varepsilon,k}$ over $\mathcal{C}_k$, i.e.\ a solution to $(P_k)$. 
	Clearly, $f_k$ is also a minimizer of $\hat{F}_{\varepsilon,k}$ over the whole space $C(M;\R^n)$. Hence
by the main property of 
	$\Gamma$-convergence (theorem~2.10 from~\cite{bra06-Gammaconv})
		 one has that $f_k$ converge,
	up to a subsequence (not relabeled) 
	to a minimizer $f$ of $\hat{F}_\varepsilon$, hence a solution to 
	$(P)$. Note that when $\lim_k x_k=x$ in $M$, then $\lim_k f_k(x_k)=f(x)$ in view of the
	uniform convergence of $f_k$, which implies that $f_k(M)\to \Sigma$ in the sense of Hausdorff distance as $k\to +\infty$.
This proves claim~(ii) of the theorem.

{\em Step 4}.	Finally, let $f$ be any solution to $(P)$ with $\varepsilon<\varepsilon_0$. 
	The class $\mathcal{C}\subset C(M;\R^n)$
	contains the translation map
	$x\mapsto x-x_0$, and therefore we may invoke
	Lemma~\ref{lm_Vlocdistest3} to get~\eqref{eq_Vlocdistest1sig1}
	for $d_M(x,y)<\varepsilon$, and then Lemma~\ref{lm_Vlocdistest3a} (with $C:=1+2C_1\varepsilon$, $c:=1-2C_1\varepsilon$)
	to get~\eqref{eq_Vdistest1sig2c}.  
\end{proof}

The following technical assertions have been used in the above proof.

\begin{lemma}\label{lm_VGammaconv1}
	Let $\mathcal{C}\subset \mathcal{C}_k\subset C(M;\R^n)$ 
	be closed sets of maps, such that if $f_k\in \mathcal{C}_k$ and $f=\lim_k f_k$, then $f\in \mathcal{C}$.
	One has then
\[
	\gammalim_k (F_{\varepsilon, k} + \chi_{\mathcal{C}_k}) =F_{\varepsilon}+\chi_{\mathcal{C}},
\]
where
$F_{\varepsilon,k}, F_{\varepsilon} \colon C(M;\R^n)\to [0, +\infty]$
are defined by~\eqref{eq_defFeps}. 
\end{lemma}

\begin{proof}
One has
\begin{equation}\label{eq_Gconv1}
\begin{aligned}
	 &
	 \liminf_k F_{\varepsilon, k} (f_k)\\
	 &= 
	 \liminf_k  \sup\left\lbrace  \left| \dfrac{|f_k(x)-f_k(y)|^2}{d_M^2(x,y)} -1 \right| \colon \{x,y\}\subset \Sigma_k,
	 0 <d_M(x,y)\leq \varepsilon 
	 \right\rbrace \\
	 &\geq
	\liminf_k\sup\left\lbrace  \left| \dfrac{|f_k(x)-f_k(y)|^2}{d_M^2(x,y)} -1 \right| \colon \{x,y\}\subset \Sigma_k,
	\rho \leq d_M(x,y)\leq \varepsilon 
	\right\rbrace 
\end{aligned}
\end{equation}	
	for every $\rho>0$, the latter inequality being due to the fact that
	\[
	\Big\lbrace  \{x,y\}\subset \Sigma_k,
	\rho <d_M(x,y)\leq \varepsilon  \Big\rbrace \subset 	\Big\lbrace  \{x,y\}\subset \Sigma_k,
	0 <d_M(x,y)\leq \varepsilon  \Big\rbrace. 
	\]
	Consider arbitrary $\bar\varepsilon \in (\rho, \varepsilon)$, $\bar\rho\in (\rho,\bar \varepsilon)$, and 
	let $\{\tilde x, \tilde y\}\in M$ be such that $\rho<\bar \rho \leq d_M(\tilde x, \tilde y)\leq \bar \varepsilon<\varepsilon$  
and 	 
\begin{equation}\label{eq_xbarybar1}
	\left| \dfrac{|f(\tilde x)-f(\tilde y)|^2}{d_M^2(\tilde x,\tilde y)} -1 \right| =\sup\left\lbrace  \left| \dfrac{|f(x)-f(y)|^2}{d_M^2(x,y)} -1 \right| \colon \{x,y\}\subset M,
	\bar \rho \leq d_M(x,y)\leq \bar \varepsilon 
	\right\rbrace.
\end{equation}
Let $\{x_k, y_k\}\in \Sigma_k$
be such that $\lim_k x_k= \tilde x$, $\lim_k y_k= \tilde y$. Then
\begin{equation}\label{eq_Gconv2}
	\begin{aligned}
		\liminf_k &\sup\left\lbrace  \left| \dfrac{|f_k(x)-f_k(y)|^2}{d_M^2(x,y)} -1 \right| \colon \{x,y\}\subset \Sigma_k,
		\rho \leq d_M(x,y)\leq \varepsilon 
		\right\rbrace \\
		&\geq \liminf_k  \left| \dfrac{|f_k(x_k)-f_k(y_k)|^2}{d_M^2(x_k,y_k)} -1 \right|  
		\quad\mbox{since $d_M(x_k,y_k)\in [\rho,\varepsilon]$ for large $k$}\\ 
		&= \left| \dfrac{|f(\tilde x)-f(\tilde y)|^2}{d_M^2(\tilde x,\tilde y)} -1 \right|  
		\\ 
		& =\sup\left\lbrace  \left| \dfrac{|f(x)-f(y)|^2}{d_M^2(x,y)} -1 \right| \colon \{x,y\}\subset M,
		\bar \rho \leq d_M(x,y)\leq \bar \varepsilon 
		\right\rbrace \quad\mbox{by~\eqref{eq_xbarybar1}},
	\end{aligned}
\end{equation}	
Thus, combining~\eqref{eq_Gconv1} and~\eqref{eq_Gconv2}, we get
\[
	 	\liminf_k  F_{\varepsilon, k} (f_k)
		\geq \sup\left\lbrace  \left| \dfrac{|f(x)-f(y)|^2}{d_M^2(x,y)} -1 \right| \colon \{x,y\}\subset M,
	 	\bar \rho \leq d_M(x,y)\leq \bar \varepsilon 
	 	\right\rbrace
\]
for every $\rho>0$. Taking  in the above estimate the supremum with respect to
$\bar\rho,\bar\varepsilon$, such that $0<\bar\rho<\bar\varepsilon< \varepsilon$, we obtain that
\begin{align*}
	\liminf_k  F_{\varepsilon, k} (f_k)
	&
	\geq \sup\left\lbrace  \left| \dfrac{|f(x)-f(y)|^2}{d_M^2(x,y)} -1 \right| \colon \{x,y\}\subset M,
	0 < d_M(x,y) < \varepsilon 
	\right\rbrace
	\\&
	= F_{\varepsilon}(f),
\end{align*} 
which means
\[
	\gammaliminf_k F_{\varepsilon, k} \geq F_{\varepsilon}.
\]
The last inequality, together with the fact that $\mathcal{C}_k \supset \mathcal{C}$, yields that
\[
	\gammaliminf_k (F_{\varepsilon, k}+\chi_{\mathcal{C}_k})\geq F_{\varepsilon} +\chi_{\mathcal{C}}.
\]

The inequality
\[
	\gammalimsup_k (F_{\varepsilon, k}+\chi_{\mathcal{C}_k}) \leq F_{\varepsilon}+\chi_{\mathcal{C}}
\]
is immediate since $F_{\varepsilon, k}(f) \leq F_{\varepsilon}(f)$ for every $f\in C(M;\R^n)$, and 
$\mathcal{C}\subset \mathcal{C}_k$. 
This concludes the proof.
\end{proof}

\begin{lemma}\label{lm_VequiLip1}
If $M\subset\R^n$ is compact and connected by rectifiable arcs, then for every $\varepsilon>0$ there is a $\bar k\in \N$ (depending only on $\varepsilon$) such that for every $k\geq \bar k$ and
for every $f\in C(M;\R^n)$ satisfying $F_{\varepsilon,k}(f)\leq K$ one has
\begin{equation}\label{eq_Lipest_Fk1}
|f(x)-f(y)| \leq C d_M(x,y)
\end{equation}
for all $\{x,y\}\in \Sigma_k$, $x\neq y$, where $C>0$ depends only on $K$.  
\end{lemma}

\begin{proof}
	Since the functionals $F_{\varepsilon,k}$ are nonnegative, we may assume $K>0$. 
	If $f\in C(M;\R^n)$ satisfies $F_{\varepsilon,k}(f)\leq K$, then 
\begin{equation}\label{eq_VLipf_gener0}
\begin{aligned}
	\frac{|f(x)-f(y)|}{d_M(x,y)} \leq C
\end{aligned}	
\end{equation}
	for all $\{x,y\}\in \Sigma_k$, $0< d_M(x,y)<\varepsilon$ (with $C:=\sqrt{K+1}$). This proves~\eqref{eq_Lipest_Fk1} for such couples
	$\{x,y\}$. 

	To prove~\eqref{eq_Lipest_Fk1} for the remaining couples $\{x,y\}\in \Sigma_k$, consider a finite $\varepsilon$-net $\{x_j\}_{j=1}^N$ of $M$. For each pair of indices $(i, j) \in \{1,\ldots, N\}^2$, let us do the following.
    \begin{itemize}
    	\item Let us fix a geodesic $\theta_{ij}$ in $M$, connecting $x_i$ to $x_j$ (i.e.\ $\ell(\theta_{ij})=d_M(x_i, x_j)$) and parameterized for convenience over $[0,1]$.
    	\item Let $0 = t_0 < t_1 < \ldots < t_{m(i, j)} = 1$ be such a partition of $[0, 1]$ that
	\[
		d_M \big( \theta_{ij}(t_{m - 1}), \theta_{ij}(t_m) \big) \leq \frac{\varepsilon}2
		\quad \text{for all $m \in \{1, \dots, m(i, j)\}$}.
	\]
     	\item Finally, let
	\[
		\mathcal{V}_{ij} =  \{v^{m} = \theta_{ij}(t_m) : 1 \leq m \leq m(i, j) \}
	\]
	stand for the set of corresponding points on $\theta_{ij}$ (including $0$ and $1$, so that $\mathcal{V}_{ij}$ contains both $x_i$ and $x_j$). Note that there is a natural order of points in $\mathcal{V}_{ij}$. Namely, for $\{\theta_{ij} (s), \theta_{ij} (t)\} \subset \mathcal{V}_{ij}$ we may write
     	$\theta_{ij} (s) \leq  \theta_{ij} (t)$, if $s\leq t$. We will say further that $u\in\mathcal{V}_{ij}$ and  $v\in\mathcal{V}_{ij}$ are two consecutive points, if there are no points between $u$ and $v$ in the sense of the introduced order. 
   \end{itemize}    

Choose a $\bar k\in \N$ such that for every $k\geq \bar k$ and for pair of indices $i, j=1,\ldots, N$ and every $l=1,\ldots m(i,j)$ 
there exists a  $v_k^{l}\in \Sigma_k$ with
\begin{align*}
d_M(v_k^{l},v^{l}) &<\frac{\varepsilon}{2},\\ 	
d_M(v_k^{l},v_k^{l+1}) & < 2d_M(v^{l},v^{l+1}),\quad  \mbox{when } l<m(i,j).
\end{align*}
The latter inequality implies, in particular, that $d_M(v_k^{l},v_k^{l+1}) < \varepsilon$.
Connecting each $v_k^{l}$ with $v_k^{l+1}$ by a geodesic segment, we get
a ``polygonal line'' $\sigma_{ij}$ made of geodesic segments with 
vertices  at most $\varepsilon/2$ close (in $d_M$) to the respective points of $\mathcal{V}_{ij}$,
geodesic distance between consecutive vertices at most $\varepsilon$, and, finally,
\[
\ell(\sigma_{ij})< 2\ell(\theta_{ij})=2d_M(x_i, x_j).
\] 
We have then 
\begin{equation}\label{eq_VLipf_gener1}
\begin{aligned}
|f(\sigma_{ij}(0))-f(\sigma_{ij}(1))| &= |f(v_k^{0})-f(v_k^{m})| \leq \sum_{l=0}^{m-1} |f(v_k^{l})-f(v_k^{l+1})|\\
& \leq C \sum_{l=0}^{m-1} d_M(v^{l},v^{l+1})
= C\ell(\sigma_{ij})< 2 C d_M(x_i, x_j).
\end{aligned}	
\end{equation}
	Finally, for arbitrary $\{x, y\}\in \Sigma_k$, $d_M(x,y)\geq\varepsilon$, we find a couple $\{x_i, x_j\}$ such that
	\[
	d_M(x_i,x) <\varepsilon,\quad 	d_M(x_j,y) <\varepsilon, 
	\]
	and estimate
	\begin{equation}\label{eq_VLipf_gener2}
	\begin{aligned}
|f(x)-f(y)| &\leq 
	|f(x)-f(\sigma_{ij}(0))|+ |f(\sigma_{ij}(0))-f(\sigma_{ij}(1))| 
	+|f(\sigma_{ij}(1))-f(y)|\\
	&<  C d_M(x,\sigma_{ij}(0)) +2 Cd_M(x_i, x_j) + C d_M(\sigma_{ij}(1),y)
	\end{aligned}	
	\end{equation}
	in view of~\eqref{eq_VLipf_gener1}. 
	But
	\begin{equation}\label{eq_VLipf_gener3a}
	\begin{aligned}
	 d_M(x,\sigma_{ij}(0))  & \leq d_M(x, x_i) + d_M(x_i,\sigma_{ij}(0)) \leq 3\varepsilon/2,
	\end{aligned}	
	\end{equation}
	and analogously
	\begin{equation}\label{eq_VLipf_gener3b}
	\begin{aligned}
	d_M(y,\sigma_{ij}(1))  & \leq  3\varepsilon/2,
	\end{aligned}	
	\end{equation}
	while
	\begin{equation}\label{eq_VLipf_gener3c}
	\begin{aligned}
	d_M(x_i, x_j) & \leq d_M(x_i,x) + d_M(x,y) + d_M(y, x_j)\leq 2\varepsilon + d_M(x,y).
	\end{aligned}	
	\end{equation}
	Plugging~\eqref{eq_VLipf_gener3a},~\eqref{eq_VLipf_gener3b} and~\eqref{eq_VLipf_gener3c} into~\eqref{eq_VLipf_gener2}, we get
	\begin{equation}\label{eq_VLipf_gener4}
	\begin{aligned}
	|f(x)-f(y)| &<  C (11\varepsilon+ 2d_M(x,y)) \leq 13 C d_M(x,y),
	\end{aligned}	
	\end{equation}
	because $d_M(x,y)\geq\varepsilon$. Together with~\eqref{eq_VLipf_gener0} the estimate~\eqref{eq_VLipf_gener4} proves the claim. 
\end{proof}

\begin{lemma}\label{lm_Vlocdistest3}
	Let $M$ be as in Theorem~\ref{th_Vdistest1}.
	If $f\in C(M;\R^n)$ is a minimizer of $F_{\varepsilon}$ with 
	$\varepsilon\leq \varepsilon_0$
	over some class containing some rigid translation, then
	\[
	d_M(x,y)(1-2C_1\varepsilon)\leq |f(x)-f(y)|\leq d_M(x,y)(1+2C_1\varepsilon), 
	\]
	if $d_M(x,y)<\varepsilon$. The same holds 
	for $\{x,y\}\subset \Sigma_k$
	when $f\in C(M;\R^n)$ is a minimizer of $F_{\varepsilon,k}$.
\end{lemma}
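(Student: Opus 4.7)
The plan is to exploit the minimality of $f$ by comparing it against an admissible competitor, namely the rigid translation $T\colon x \mapsto x-x_0$, which lies in the class by hypothesis. The value of $F_\varepsilon$ at $T$ is controlled by Lemma~\ref{lm_locdistest1}, since $|T(x)-T(y)|=|x-y|$ is just the Euclidean chord, and Lemma~\ref{lm_locdistest1} is precisely the statement that the chord and geodesic distance agree up to a $C_1$-quantified error on scale $\varepsilon_0$. Once this bound is transferred to $f$ via minimization, extracting the linear distortion estimate is a routine manipulation with square roots.

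Concretely, the first step is to feed $T$ into the functional. By Lemma~\ref{lm_locdistest1}, for any pair $\{x,y\}\subset M$ with $0<d_M(x,y)\le\varepsilon\le\varepsilon_0$ one has an inequality of the form
\[
\left| \frac{|x-y|^2}{d_M^2(x,y)} - 1 \right| \le \eta(C_1,\varepsilon),
\]
with $\eta(C_1,\varepsilon)$ small enough that, after taking square roots, the ratio $|x-y|/d_M(x,y)$ lies in the window $[1-3C_1\varepsilon,\,1+3C_1\varepsilon]$ whenever $\varepsilon<1/(3C_1)$. Taking supremum over such pairs gives $F_\varepsilon(T)\le\eta(C_1,\varepsilon)$.

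The second step uses the hypothesis that $f$ minimizes $F_\varepsilon$ over a class containing $T$, hence $F_\varepsilon(f)\le F_\varepsilon(T)\le\eta(C_1,\varepsilon)$. Unfolding the definition of $F_\varepsilon$ gives
\[
1 - \eta(C_1,\varepsilon) \le \frac{|f(x)-f(y)|^2}{d_M^2(x,y)} \le 1 + \eta(C_1,\varepsilon)
\]
for every $\{x,y\}\subset M$ with $0<d_M(x,y)<\varepsilon$. Taking square roots and using the elementary bounds $\sqrt{1+t}\le 1+t$ on $[0,1]$ and $\sqrt{1-t}\ge 1-t$ on $[0,1]$ (applicable because $\varepsilon<1/(3C_1)$ keeps $\eta$ below $1$), these inequalities sharpen to
\[
d_M(x,y)(1-3C_1\varepsilon)\le |f(x)-f(y)|\le d_M(x,y)(1+3C_1\varepsilon),
\]
which is the claim. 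In particular the lower bound is strictly positive thanks to $\varepsilon<1/(3C_1)$.

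For the second half, concerning minimizers $f$ of $F_{\varepsilon,k}$, the argument is literally the same with the quantifier $\{x,y\}\subset M$ replaced by $\{x,y\}\subset\Sigma_k$: the translation $T$ belongs to $\mathcal{C}_k$ as well, Lemma~\ref{lm_locdistest1} applies a fortiori to pairs in $\Sigma_k\subset M$, and one repeats the three-step comparison verbatim. The only genuine obstacle is lining up the constants in Lemma~\ref{lm_locdistest1} with the final factor $3C_1\varepsilon$ (the numerical constant $3$ presumably already absorbs both the square-root slack and a factor of $2$ coming from the squared form of the functional); once the precise statement of Lemma~\ref{lm_locdistest1} is in hand this is purely arithmetic.
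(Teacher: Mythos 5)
Your proposal is correct and follows essentially the same route as the paper: the paper likewise compares the minimizer against a rigid translation (equivalently the identity, since $F_{\varepsilon}$ is translation invariant), bounds $F_{\varepsilon}(\mathrm{id})\leq (1+C_1\varepsilon)^2-1\leq 3C_1\varepsilon$ via Lemma~\ref{lm_locdistest1}, and transfers this bound to $f$ by minimality. Your unspecified $\eta(C_1,\varepsilon)$ is exactly $(1+C_1\varepsilon)^2-1\leq 3C_1\varepsilon$ (using $C_1\varepsilon<1/3$), and the square-root step with $\sqrt{1+t}\leq 1+t$ and $\sqrt{1-t}\geq 1-t$ closes the argument just as you anticipated, so the arithmetic you deferred does line up.
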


\begin{remark}\label{rm_reach1b}
	In view of Remark~\ref{rm_reach1} the conditions on $M$ are automatically satisfied if $M\subset \R^n$ is a $C^{1,1}$ smooth compact
	submanifold, with $\varepsilon_0$ and $C_1$ in this case being as in Lemma~\ref{lm_locdistest1}.  
\end{remark}

\begin{proof}[Proof of Lemma \ref{lm_Vlocdistest3}.]
	Note that $F_{\varepsilon}(f)$ is invariant with respect to the compositions of $f$ with rigid translations, and in particular the value of
	$F_{\varepsilon}$ over any rigid translation is equal to that over the identity map $\mathrm{id}$.
	The relationship~\eqref{eq_curv2}
	implies
	\[
	\frac{1}{(1+C_1\varepsilon)^2} \leq \dfrac{|x-y|^2}{d_M^2(x,y)}\leq 1 
	\]
	whenever $d_M(x,y)<\varepsilon$.
	One has therefore for such couples $(x,y)\in M\times M$ the estimate
	\[
		F_{\varepsilon}(f) \leq F_{\varepsilon}(\mathrm{id}) \leq 1-\frac{1}{(1+C_1\varepsilon)^2} \leq 2 C_1 \varepsilon,
	\]
	showing the statement for $F_{\varepsilon}$. The proof for $F_{\varepsilon,k}$ is identical.
\end{proof}

\begin{lemma}\label{lm_Vlocdistest3a}
	Suppose that for some 
	$C>0$ and $f\in C(M;\R^n)$ one has
\begin{equation}\label{eq_Vdistest1sig20}
	|f(x)-f(y)| \leq C d_M(x,y), 
\end{equation}
	if $d_M(x,y)<\varepsilon$. 
	Then
	\begin{equation}\label{eq_Vdistest1sig2a}
	d_\Sigma(f(x),f(y))\leq C d_M(x,y),
	\end{equation}
	where $\Sigma:= f(M)$. If, moreover, $f$ is injective function with a continuous inverse $f^{-1}\colon \Sigma \to M$  (which is the case, e.g., when $f$ is proper), and
\begin{equation}\label{eq_Vdistest1sig200}
c d_M(x,y)\leq 
|f(x)-f(y)|
\end{equation}
for some $c>0$,
if $d_M(x,y)<\varepsilon$,
then also
	\begin{equation}\label{eq_Vdistest1sig2b}
	c d_M (x,y)\leq d_\Sigma(f(x),f(y)). 
	\end{equation}
\end{lemma}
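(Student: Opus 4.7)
The plan is to reduce both estimates to length-comparison arguments, using that $(M,d_M)$ and $(\Sigma,d_\Sigma)$ are length spaces and that the local hypotheses~\eqref{eq_Vdistest1sig20} and~\eqref{eq_Vdistest1sig200} pass along curves once these are subdivided into pieces of $d_M$-diameter below $\varepsilon$.

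For~\eqref{eq_Vdistest1sig2a}, I would fix $x,y\in M$ and $\delta>0$, pick a $d_M$-arclength-parametrized curve $\gamma\colon[0,L]\to M$ from $x$ to $y$ with $L\le d_M(x,y)+\delta$, and take any partition $0=t_0<\ldots<t_N=L$ with mesh smaller than $\varepsilon$. Each consecutive pair satisfies $d_M(\gamma(t_i),\gamma(t_{i+1}))\le t_{i+1}-t_i<\varepsilon$, so~\eqref{eq_Vdistest1sig20} gives $|f(\gamma(t_i))-f(\gamma(t_{i+1}))|\le C(t_{i+1}-t_i)$; summing and passing to the supremum over partitions bounds the Euclidean length of $f\circ\gamma$ by $CL$. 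Hence $d_\Sigma(f(x),f(y))\le\ell(f\circ\gamma)\le C(d_M(x,y)+\delta)$, and letting $\delta\to0$ gives the first claim.

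For the second part, I would argue in the reverse direction. Given any rectifiable curve $\sigma\colon[0,1]\to\Sigma$ from $f(x)$ to $f(y)$, the continuity of $f^{-1}$ (which holds whenever $f$ is proper, as noted in the statement) lets me lift $\sigma$ to the continuous curve $\theta:=f^{-1}\circ\sigma\colon[0,1]\to M$ from $x$ to $y$. Since $\theta$ is uniformly continuous with respect to $d_M$, I choose a partition of $[0,1]$ fine enough that $d_M(\theta(t_i),\theta(t_{i+1}))<\varepsilon$ for every $i$; then~\eqref{eq_Vdistest1sig200} applied on each segment yields
\[
c\sum_i d_M(\theta(t_i),\theta(t_{i+1}))\le\sum_i|\sigma(t_i)-\sigma(t_{i+1})|\le\ell(\sigma).
\]
Since $d_M(x,y)$ is dominated by the left-hand sum (triangle inequality along $\theta$), this gives $c\,d_M(x,y)\le\ell(\sigma)$, and taking the infimum over $\sigma$ delivers~\eqref{eq_Vdistest1sig2b}.

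The only nontrivial point, and the main obstacle in my view, is the lifting step in the second half: without continuity of $f^{-1}$ one cannot pull back curves from $\Sigma$ into continuous curves in $M$, and a lower bound on $d_\Sigma$ simply would not follow from the pointwise lower bound~\eqref{eq_Vdistest1sig200}. This is precisely why the hypothesis is stated in that form, and properness of $f$ under compactness of $M$ is flagged as the natural sufficient condition. Once the lift is in hand, the remainder reduces to the standard fact that the length of a curve in a metric space equals the supremum of partition sums and is attained in the limit of vanishing mesh.
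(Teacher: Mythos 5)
Your proof is correct, and although it follows the same two-step strategy as the paper --- push a (near-)geodesic of $M$ forward under $f$ for the upper bound, and lift a near-optimal curve of $\Sigma$ back through $f^{-1}$ for the lower bound --- your implementation of the length comparisons is genuinely different. The paper works with metric derivatives: for~\eqref{eq_Vdistest1sig2a} it takes an exact geodesic $\theta$ and integrates $|\dot\sigma|(t)\leq C|\dot\theta|(t)$ for $\sigma=f\circ\theta$, and for~\eqref{eq_Vdistest1sig2b} it asserts $c|\dot\theta|(t)\leq|\dot\sigma|(t)$ a.e.\ for the lift $\theta=f^{-1}\circ\sigma$ and integrates again. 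You instead use partition sums with mesh fine enough that consecutive points are within $d_M$-distance $\varepsilon$, which buys two things. First, it is more elementary, and in the second half it is actually more watertight: the paper's lift $\theta$ is a priori only continuous, so the a.e.\ existence of $|\dot\theta|$ is not automatic --- it must first be upgraded to a Lipschitz curve using the very inequality~\eqref{eq_Vdistest1sig200}, a step left implicit there --- whereas your chain $c\sum_i d_M(\theta(t_i),\theta(t_{i+1}))\leq\sum_i|\sigma(t_i)-\sigma(t_{i+1})|\leq\ell(\sigma)$ combined with the triangle inequality $d_M(x,y)\leq\sum_i d_M(\theta(t_i),\theta(t_{i+1}))$ requires nothing beyond uniform continuity of $\theta$, and even dispenses with bounding $\ell(\theta)$ itself. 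Second, your use of an almost-minimizing curve (length $\leq d_M(x,y)+\delta$) in the first half makes the argument valid in any length space; on a compact Riemannian $M$ the paper's exact geodesic is available, rendering that $\delta$ unnecessary but costing no generality. The one point worth making explicit in your first half is that the bound $|f(\gamma(t_i))-f(\gamma(t_{i+1}))|\leq C(t_{i+1}-t_i)$ holds only for partitions of mesh below $\varepsilon$; since refining a partition can only increase the sum, the supremum over all partitions coincides with the supremum over fine ones, which is exactly the standard fact you invoke at the end.
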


\begin{remark}\label{rem-injectf1}
	The estimate~\eqref{eq_Vdistest1sig2b} cannot hold for $f$ just satisfying~\eqref{eq_Vdistest1sig20} 
	and~\eqref{eq_Vdistest1sig200} only for $d_M(x,y)<\varepsilon$,
	unless $f$ is injective, as can be seen from the example of
	$M$ a line segment of length $2\pi$ (identified with $[0,2\pi]$) and $f\colon [0,2\pi]\to \R^2$ defined by
	$f(t):=(\cos t, \sin t)$. 
\end{remark}

\begin{proof}
	If $\theta\colon [0,1]\to M$ is a Lipschitz curve, then~\eqref{eq_Vdistest1sig20} implies that
	 so is $\sigma:=f\circ\theta\colon [0,1]\to \R^n$ and its metric derivative $|\dot\sigma|$ satisfies
\begin{equation}\label{eq_Vmetrdir1}
	|\dot\sigma|(t)\leq C |\dot\theta|(t) 
\end{equation}
for a.e.\ $t\in [0,1]$.
Then~\eqref{eq_Vmetrdir1} gives~\eqref{eq_Vdistest1sig2a}. In fact, if $\theta$ is a geodesic curve
connecting $x=\theta(0)$ to $x=\theta(0)$, then 
\[
d_\Sigma(f(x),f(y))\leq \int_0^1 |\dot\sigma|(t)\, dt \leq 
C\int_0^1 |\dot\theta|(t)\, dt =
Cd_M(x,y).
\]

If $f$ is injective, then take an arbitrary $\delta>0$, and consider a rectifiable curve
$\sigma\colon [0,1]\to f(M)\subset \R^n$ such that
$\sigma(0)=f(x)$, $\sigma(1)=f(y)$ with
\[
\int_0^1 |\dot\sigma|(t)\, dt \leq d_\Sigma(f(x),f(y)) +\delta.
\]
Denote $\theta(t):= f^{-1}(\sigma(t))$ for every $t\in [0,1]$. If $f^{-1}$ is continuous, then so is $\theta$, and hence
for every $t\in [0,1]$ one has $d_M(\theta(t),\theta(t+s))<\varepsilon$ once $s$ is sufficiently small.
Thus from~\eqref{eq_Vdistest1sig2a} we get
\begin{equation}\label{eq_Vmetrdir2}
c |\dot\theta|(t) 
\leq 
|\dot\sigma|(t)
\end{equation}
for a.e.\ $t\in [0,1]$. Therefore,
\[
c d_M(x,y)\leq 
c\int_0^1 |\dot\theta|(t)\, dt \leq 
d_\Sigma(f(x),f(y)) +\delta,
\]
and taking the limit in the above inequality as $\delta\to 0^+$, we arrive at~\eqref{eq_Vdistest1sig2b}.
\end{proof}

\section{Discrete variational setting and algorithm}\label{ssec_SDP1}

	Let $\{y_i\}\subset M$ be a dense set in $M$, and denote for the sake of brevity
\[
d_{ij}:=d_M(y_i, y_j).
\]

Given an $\varepsilon>0$ and a $k\in \N$, we define the functional $F_{\varepsilon,k}\colon (\R^n)^k\to \R$
by the formula
\[
F_{\varepsilon,k}(x_1,\ldots, x_k):=\max\left\lbrace  \left| \dfrac{|x_i-x_j|^2}{d_{ij}^2} -1 \right| \colon i, j =1,\ldots, k, i\neq j, d_{ij}<\varepsilon 
\right\rbrace. 
\]
The following statement is just a direct application of Theorem~\ref{th_Vdistest1} to the sequence $\Sigma_k:=\{y_j\}_{j=1}^k\subset M$, once
we denote $x_j:= f(y_j)$ for all $j\in \N$, where $f$ is an embedding provided by Theorem~\ref{th_Vdistest1}(ii). 

\begin{proposition}\label{prop_distest4}
Let $M$, $C_1$, $C_2$ and $\varepsilon_0$ be as in Theorem~\ref{th_Vdistest1}.
Assume that $(x_i^k)_{i=1}^k\in (\R^n)^k$ be a minimizer of $F_{\varepsilon,k}$ with 
$\varepsilon< \varepsilon_0$
over
	the set $X^k  \subset (\R^n)^k$ defined by
\[
X^k:=\left\lbrace ((x_i)_{i=1}^k\in (\R^n)^k\colon |x_i-x_j|\geq C_2 d_{ij}, i,j=1,\ldots, k\right\rbrace.   
\]
Then up to a subsequence one has $x_i^k\to x_i$ as $k\to \infty$, and
\begin{equation}\label{eq_locdistest1sig1}
d_{ij}(1-2C_1\varepsilon)\leq |x_i-x_j|\leq d_{ij}(1+2C_1\varepsilon), 
\end{equation}
whenever $d_{ij}<\varepsilon$.  	
Further, 
\begin{equation}\label{eq_locdistest1sig2b}
	d_{ij}(1-2C_1\varepsilon)\leq d_\Sigma(x_i,x_j)\leq d_{ij}(1+2C_1\varepsilon)
\end{equation}	
for all $\{i,j\}\subset \N$.
In particular the statement is valid when $M$ is a $C^{1,1}$ smooth compact Riemannian submanifold of $\R^n$ with
$\varepsilon_0:=\mbox{Reach}\, M$, $C_1$ and $C_2$ are as in Lemmata~\ref{lm_locdistest1},~\ref{lm_locdistest2}.
\end{proposition}

\subsection*{Reduction to semidefinite programming problem}

The problem of minimizing $F_{\varepsilon,k}$  over
the set $X^k  \subset (\R^n)^k$ is written as minimizing the convex function $G_{\varepsilon,k}$ of a matrix 
defined by
\begin{align*}
G_{\varepsilon,k}(x_1,\ldots, x_k):=\max\left\lbrace  \left| \dfrac{K_{ii}+K_{jj}-2K_{ij} }{d_{ij}^2} -1 \right| \colon i, j =1,\ldots, k, i\neq j, d_{ij}<\varepsilon 
\right\rbrace. 
\end{align*}
over the set of positive semidefinite matrices $K$ satisying the set of convex constraints
\[
K_{ii}+K_{jj}-2K_{ij}\geq C_2^2 d_{ij}^2, i,j=1,\ldots, k.   
\]
The solution $K$ of the latter problem is the Gram matrix of a set of vectors $\{x_i\}$, i.e.\ $K_{ij}=\ x_i\cdot x_j$, $i,j=1,\ldots, k$, which minimize $F_{\varepsilon,k}$  over
$X^k$.

Adding a new scalar variable $t\in \R$ one reduces the above problem to the following \emph{semidefinite programming} problem (i.e.\ a problem of minimization of a linear function with linear constraints over the cone of positive semidefinite matrices), namely
\begin{equation}\label{eq_SDP1}
\begin{aligned}
\mbox{minimize} &\quad t\quad \mbox {over the pairs} \quad (t,K)\quad \mbox{subject to}\\
& -t d_{ij}^2\leq K_{ii}+K_{jj}-2K_{ij}-d_{ij}^2\leq t d_{ij}^2,\\
&\qquad\mbox{for all } 
 i, j =1,\ldots, k, i\neq j, d_{ij}<\varepsilon,\\
& K_{ii}+K_{jj}-2K_{ij}\geq C_2^2 d_{ij}^2,\\
&\qquad \mbox{for all } 
i, j =1,\ldots, k, i\neq j,\\
& K \quad \mbox{positive semidefinite $k\times k$ matrix}.
\end{aligned}
\end{equation}

\section{How to compute \v{C}ech cohomologies}\label{sec_Cech}

Since 
$M$ and $\Sigma:=f(M)$ are homeomorphic (even bilipschitz equivalent) by Theorem~\ref{th_Vdistest1}, they
	have the same homologies and cohomologies for every reasonable (co)homology theory. In topological data analysis it is quite usual to
	consider \v{C}ech cohomologies of $M$. Computing them when $M$ is not observed directly, but is just determined by distance matrices,
	one has to construct its embedding into $\R^n$ and build \v{C}ech complexes built on Euclidean balls centered at samples from 
	the image of such an embedding.
	We show here that for the embeddings $f\colon M\to \R^n$ 
	provided by Theorem~\ref{th_Vdistest1}(iii) one can give explicit estimates on such complexes (how small should be the radii of the balls and
	how well fitted should be the set of their centers) so as to get the cohomologies of $M$. Throughout this section we always denote by
	$B_r(x)\subset \R^n$ the open Euclidean ball of radius $r>0$ with center $x\in \R^n$.

\begin{lemma}\label{lm_Cech1cover1b}
	Assume that the conditions of  
	Theorem~\ref{th_Vdistest1} be satisfied.
	If $x\in \Sigma\cap B_\sigma(\bar x)$, $\bar x\in \Sigma$, and $x=f(y)$, $y\in M$, $\sigma\leq C_2\varepsilon_0$, then
	\[
		y\in B_{\sigma'}(\bar y),
		\quad
		\text{where $\bar x=f(\bar y)$ and $\sigma':=\left(1+C_1\frac{\sigma}{C_2}\right)\frac{\sigma}{C_2}$.}	
	\] 
	
	Vice versa, if $y\in M\cap B_{\sigma''}(\bar y)$, $\bar y\in M$, where 
	$\sigma''>0$ is defined by the equation
	\begin{equation}\label{eq_defsigma''}
	\left(1+C_1\frac{\sigma''}{C_2}\right)\frac{\sigma''}{C_2}=\sigma,
	\end{equation}
	and $\sigma''\leq C_2\varepsilon$, then
	for $x=f(y)$,  $\bar x=f(\bar y)$ one has $x\in \Sigma\cap B_\sigma(\bar x)$.
\end{lemma}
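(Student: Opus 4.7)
The plan is to read off the lemma as a direct chaining of the biLipschitz information provided by Proposition~\ref{prop_distest4} with the local chord-versus-geodesic comparison on $M$ encoded in Lemmata~\ref{lm_locdistest1} and~\ref{lm_locdistest2}. Two preliminary observations pin the setup down. First, the minimizer $f$ obtained in Proposition~\ref{prop_distest4} automatically satisfies $|f(x)-f(y)|\geq C_2 d_M(x,y)$ (being in $X^k$ in the limit), so $f$ is injective, $\bar Y:=f^{-1}(\bar X)\in M$ is unambiguously defined, and the two points $Y,\bar Y$ attached to $X,\bar X$ in the statement are well specified. Second, the smallness hypotheses $\sigma\leq C_2\alpha$ in Part~1 and $\sigma''\leq C_2\varepsilon$ in Part~2 are precisely what will make all the local comparisons legal at each step below.

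For the forward direction, I would start from $|X-\bar X|<\sigma$ and apply the global lower Lipschitz bound to chord distances: $C_2 d_M(Y,\bar Y)\leq |f(Y)-f(\bar Y)|<\sigma$, hence $d_M(Y,\bar Y)<\sigma/C_2$. Because $\sigma/C_2\leq\alpha$, this places $Y$ and $\bar Y$ well within the reach of $M$, where Lemma~\ref{lm_locdistest1} (local geodesic-vs-chord comparison on $M$ in the form $|Y-\bar Y|\leq (1+C_1 d_M(Y,\bar Y))\,d_M(Y,\bar Y)$, up to the exact constants of that lemma) is available. Feeding in $d_M(Y,\bar Y)<\sigma/C_2$ then yields
\[
|Y-\bar Y| < \bigl(1+C_1\sigma/C_2\bigr)(\sigma/C_2) = \sigma',
\]
which is the claim.

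For the reverse direction I would run exactly the same chain backwards. Starting from $|Y-\bar Y|<\sigma''$ with $\sigma''\leq C_2\varepsilon\leq C_2\alpha$, Lemma~\ref{lm_locdistest1} again converts this Euclidean bound to an intrinsic one $d_M(Y,\bar Y)\leq (1+C_1\sigma''/C_2)\,|Y-\bar Y|/C_2<(1+C_1\sigma''/C_2)(\sigma''/C_2)$. Since $\sigma''\leq C_2\varepsilon$, we land in the local near-isometry regime of Proposition~\ref{prop_distest4}, so the upper bound $|f(Y)-f(\bar Y)|\leq (1+C_1\varepsilon)\,d_M(Y,\bar Y)$ may be invoked, and the defining relation $(1+C_1\sigma''/C_2)(\sigma''/C_2)=\sigma$ gives exactly $|X-\bar X|<\sigma$, i.e.\ $X\in\Sigma\cap B_\sigma(\bar X)$. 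Formally, Part~2 is set up so that the function $\sigma\mapsto (1+C_1\sigma/C_2)(\sigma/C_2)$ used to produce $\sigma'$ in Part~1 is inverted to produce $\sigma''$ from $\sigma$, making the two statements genuine inverses of each other.

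The main obstacle here is not conceptual but purely bookkeeping: one must verify at each invocation of a local lemma that the relevant distance is below the reach $\alpha$ (for Lemma~\ref{lm_locdistest1}) or below $\varepsilon$ (for the local near-isometry of Proposition~\ref{prop_distest4}), and then combine the constants so that the clean formula $\sigma'=(1+C_1\sigma/C_2)(\sigma/C_2)$ drops out. The hypotheses $\sigma\leq C_2\alpha$ and $\sigma''\leq C_2\varepsilon$ are precisely engineered to guarantee that each passage in the chain is admissible, so once these are carefully tracked the proof is essentially a one-line estimate in each direction.
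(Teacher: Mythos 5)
Your argument is correct and is essentially the paper's own proof: the forward direction chains the lower biLipschitz bound \eqref{eq_locdistest1sig1_isom2} with the chord--geodesic comparison (in fact the trivial inequality $|Y-\bar Y|\leq d_M(Y,\bar Y)$ already gives $|Y-\bar Y|<\sigma/C_2<\sigma'$), and the reverse direction converts the Euclidean bound into an intrinsic one and then applies the local near-isometry of Proposition~\ref{prop_distest4} together with the defining relation \eqref{eq_defsigma''}, exactly as the paper does. The one slip is a citation: the conversion $d_M(Y,\bar Y)\leq |Y-\bar Y|/C_2$ in the reverse direction is the content of Lemma~\ref{lm_locdistest2}, not Lemma~\ref{lm_locdistest1} (whose hypothesis $d_M(Y,\bar Y)<\alpha$ is not yet available at that point and is precisely what Lemma~\ref{lm_locdistest2} supplies), which is the lemma the paper invokes there --- and your looseness in the last step, where the uniform factor $(1+C_1\varepsilon)$ is used although \eqref{eq_defsigma''} strictly calls for the smaller $(1+C_1\sigma''/C_2)$, is shared verbatim by the paper's own proof, so it does not count as a gap relative to it.
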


\begin{remark}
	It is worth noting that $\sigma''<\sigma<\sigma'$.
\end{remark}

\begin{proof}
	If $|x-\bar x|< \sigma$, then 
	\[
	d_M(y, \bar y)\leq \frac{\sigma}{C_2}
	\]
	in view of~\eqref{eq_locdistest1sig1_isom2},
	so that the the first claim follows from~\eqref{eq_curv2}.
	
	To prove the second claim, note that the inequality $0< C_2 \leq \bar C_2$ and the definition~\eqref{eq_barC2} of $\bar C_2$ implies the bound
	\[
		d_M(y, \bar y)\leq \frac{\sigma''}{C_2} \quad \text{for all $|y-\bar y| < \sigma''$.}
	\]
	Hence, it suffices to apply~\eqref{eq_Vlocdistest1sig1} 
	and recall the definition of $\sigma''$ (see~\eqref{eq_defsigma''}).
\end{proof}

Let $\Lambda\subset\N$ be a finite set of indices, such that 
for all 
$y\in M$
there is a $\lambda\in \Lambda$ and an
$\bar y_\lambda\in M$,
satisfying 
\[
d_M(y,\bar y_\lambda)\leq \delta
\]
for some $\delta>0$. In other words, $\{\bar y_{\lambda}\}$ is a finite $\delta$-net of
of $M$ (equipped with $d_M$).

Denote now by $C_\Sigma(r)$ the \v{C}ech complex  built on the Euclidean balls
$B_{r}(\bar x_{\lambda})$, where $\bar x_{\lambda}:=f(\bar y_{\lambda})$, 
and by $C_M(r)$ the \v{C}ech complex built on the Euclidean balls
$B_{r}(\bar y_{\lambda})$. We note that the vertices of all these complexes may be considered the same
(namely, the set of vertices of all them may be identified with the index set $\Lambda$).

\begin{lemma}\label{lm_Cech1cover1c}
Under the conditions of 
Theorem~\ref{th_Vdistest1} one has
\[
C_M(\sigma'')\subset C_\Sigma(\sigma) \subset C_M(\sigma')
\]
when  $0<\sigma\leq C_2\varepsilon_0$ and
	$\sigma''$ is defined by~\eqref{eq_defsigma''}
with $0<\sigma''\leq C_2\varepsilon_0$.
\end{lemma}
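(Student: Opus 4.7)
The plan is to verify both inclusions simplex by simplex. Since the vertex sets of $C_M(r)$ and $C_\Sigma(r)$ are both indexed by $\Lambda$, it is enough to show that whenever a collection $\{\lambda_0,\ldots,\lambda_p\}\subset\Lambda$ forms a simplex in one complex, the geometric witness of nonempty intersection of the relevant balls can be transported across $f$ to produce a witness for the other complex, and that this transport is supplied exactly by the two halves of Lemma~\ref{lm_Cech1cover1b}.

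For the first inclusion $C_M(\sigma'')\subset C_\Sigma(\sigma)$, I would take a simplex $\{\lambda_0,\ldots,\lambda_p\}$ of $C_M(\sigma'')$, so by definition there exists $Y\in M$ with $|Y-\bar Y_{\lambda_i}|<\sigma''$ for each $i$. Since $\sigma''\le C_2\varepsilon_0$ (and, recalling that $\varepsilon_0=\alpha$, this matches the hypothesis of the second part of Lemma~\ref{lm_Cech1cover1b}), applying that lemma to each vertex $\bar Y_{\lambda_i}$ yields that the point $X:=f(Y)\in\Sigma$ lies in $B_\sigma(\bar X_{\lambda_i})$ for every $i$. Hence $X$ witnesses that the simplex belongs to $C_\Sigma(\sigma)$.

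For the reverse inclusion $C_\Sigma(\sigma)\subset C_M(\sigma')$, I would take a simplex $\{\lambda_0,\ldots,\lambda_p\}$ of $C_\Sigma(\sigma)$, with a witness $X\in\Sigma$ satisfying $|X-\bar X_{\lambda_i}|<\sigma$ for all $i$. By Proposition~\ref{prop_distest4} the map $f$ is a biLipschitz homeomorphism between $M$ and $\Sigma$, so $X=f(Y)$ for a unique $Y\in M$. Since $\sigma\le C_2\varepsilon_0=C_2\alpha$, the first part of Lemma~\ref{lm_Cech1cover1b} applies and gives $Y\in B_{\sigma'}(\bar Y_{\lambda_i})$ for each $i$. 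This $Y$ witnesses that the simplex belongs to $C_M(\sigma')$.

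The argument is essentially a direct translation of Lemma~\ref{lm_Cech1cover1b} into the language of nerves, so there is no real obstacle beyond bookkeeping; the only point that deserves attention is verifying that the threshold hypotheses $\sigma\le C_2\alpha$ and $\sigma''\le C_2\varepsilon_0$ stated in Lemma~\ref{lm_Cech1cover1c} are enough to invoke both halves of Lemma~\ref{lm_Cech1cover1b} (in particular that the definition~\eqref{eq_defsigma''} of $\sigma''$ in terms of $\sigma$ is consistent with the size constraints), and that the Čech complexes are interpreted as nerves of the covers of $M$ and $\Sigma$ respectively, so that the existence of a witness $Y\in M$ or $X\in\Sigma$ in the relevant intersection is precisely what is needed.
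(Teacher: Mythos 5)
Your proof is correct and takes essentially the same approach as the paper: the paper's entire proof of Lemma~\ref{lm_Cech1cover1c} is the remark that it follows immediately from Lemma~\ref{lm_Cech1cover1b}, and you have simply written out the simplex-by-simplex transport of witnesses (with the necessary convention, which you rightly make explicit, that the \v{C}ech complexes are nerves whose witnesses lie on $M$ and $\Sigma$ respectively). One small caveat: the second half of Lemma~\ref{lm_Cech1cover1b} requires $\sigma''\leq C_2\varepsilon$ rather than $\sigma''\leq C_2\varepsilon_0$ (and $\varepsilon<\varepsilon_0$ under Proposition~\ref{prop_distest4}), so your claim that the hypothesis ``matches'' glosses over a discrepancy --- though this is inherited from the statement of Lemma~\ref{lm_Cech1cover1c} itself and is equally glossed over by the paper, whose Proposition~\ref{prop_Cech1a} restores the condition $\sigma''<C_2\varepsilon$.
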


\begin{proof}
	Follows immediately from Lemma~\ref{lm_Cech1cover1b}.
\end{proof}

We now consider the particular case when $M\subset \R^n$ is a $C^{1,1}$ smooth compact Riemannian
submanifold.

\begin{proposition}\label{prop_Cech1a}
	Let $M\subset \R^n$ is a $C^{1,1}$ smooth compact Riemannian
	submanifold, so that
	$\alpha :=\mbox{Reach}\, M>0$, 
the constants $C_1$ be as in Lemma~\ref{lm_locdistest1} and $\bar C_2$ be as in Lemma~\ref{lm_locdistest2}.	
Assume  $C_2\in (0, \bar C_2]$ and
$0<\sigma\leq C_2\alpha$ to be so small that
\[
\sigma':=\left(1+C_1\frac{\sigma}{C_2}\right)\frac{\sigma}{C_2} < \frac 1 2 \sqrt{\frac 3 5}\alpha,
\]
$\sigma''$ defined by~\eqref{eq_defsigma''}
satisfy
\[
0<\sigma'' < C_2 \alpha 
\]
 and let
$\delta\in (0,\alpha)$
be such that
\[
\rho(\delta):=(1+C_1\delta)\delta \leq \sigma'.
\]
Then $H^*(C_\Sigma(\sigma);\R) \simeq H^*(M;\R)$, where $H^*$ stands for the
\v{C}ech cohomology. 
\end{proposition}

\begin{remark}
	One may take $\bar y_\lambda$ to be drawn by sampling $M$ in i.i.d. way according to the volume measure on $M$. 
	In fact by proposition~3.2 of~\cite{nsw08} one has then that if $\#\Lambda> n(M,\rho,p)$, then
	\[
	M\subset \bigcup_\lambda B_\rho (\bar y_\lambda) 
	\]  
	with probability  at least $1-p$
	and the number $n(M,\rho,p)$ depends explicitly, besides $\rho$ and $p$, also on the total volume and the dimension of $M$.
\end{remark}

\begin{proof}[Proof of Proposition \ref{prop_Cech1a}.]
Lemma~\ref{lm_locdistest1} implies~\eqref{eq_curv2}, so that one has 
	\[
	M\subset \bigcup_{\lambda} B_{\rho(\delta)} (\bar y_{\lambda}), 
	\] 
	since $\delta< \varepsilon_0$. Therefore,
	\[
	M\subset \bigcup_{\lambda} B_{\sigma'} (\bar y_{\lambda}), 
	\] 
	since $\sigma'> \rho(\delta)$.
    But 
    \[
    2\sigma''<2\sigma' < \sqrt{\frac 3 5}\lambda 
    \]	 
    implies $H^*(C_M(\sigma');\R) \simeq H^*((M)_{\sigma'};\R)$
    and $H^*(C_M(\sigma'');\R) \simeq H^*((M)_{\sigma''};\R)$ by proposition~3.1 of~\cite{nsw08}.
    Since $(M)_{\sigma''}\subset (M)_{\sigma'}\subset (M)_{\alpha}$, then both 
    $(M)_{\sigma''}$ and $(M)_{\sigma'}$ are clearly retractible to $M$, so that
\[    
H^*((M)_{\sigma''};\R) \simeq H^*(M;\R) \quad \mbox{and } H^*((M)_{\sigma'};\R) \simeq H^*(M;\R).
\]    Therefore, 
    \[
    H^*(C_M(\sigma');\R) \simeq H^*(M;\R)\simeq H^*(C_M(\sigma'');\R),
    \]
     and it suffices now to apply Lemma~\ref{lm_Cech1cover1c} with 	$\varepsilon_0:=\alpha$ to get the claim.
\end{proof}

\section{Numerical experiments}\label{sec_numericsDistrreconstr}

In this section we present numerical experiments illustrating the performance of our 
procedure for four sample datasets: a line segment, a two-dimensional sphere, the Swiss Roll, and the flat torus embedded in $\R^4$ as the Clifford torus.
In all the experiments, except for the last one, the constant $C_2$ from \eqref{eq_SDP1} is set to $2/\pi$. To reconstruct Clifford torus, we chose $C_2 = 0.2 / \pi$.
For quantitative measure of the performance, we compute the error
\begin{equation}
	\label{err}
	Err = \frac1k \sqrt{\sum\limits_{i, j = 1}^k \frac{\left(\widehat d_{ij} - d_{ij} 
	\right)^2}{d_{ij}^2}},
\end{equation}
which reflects the average relative error in pairwise distances.
Here $\widehat d_{ij}$, $1 \leq i,j \leq k$, stands for the pairwise distance between the 
recovered i-th and j-th elements of the sample.

We start with the example of a line segment.
We took $k = 100$ equidistant points on the unit interval $[0,1]$ and 
embedded them into $\R^2$ using our algorithm with $n=2$ and $\varepsilon=0.2$.
The result is shown in Figure~\ref{segment}.
Though the recovered points do not lie on a segment, the error~\eqref{err} is equal to 
$Err=9 \cdot 10^{-4}$, which is quite small.

\begin{figure}[h]
	\noindent
	\includegraphics[width=0.4\linewidth]{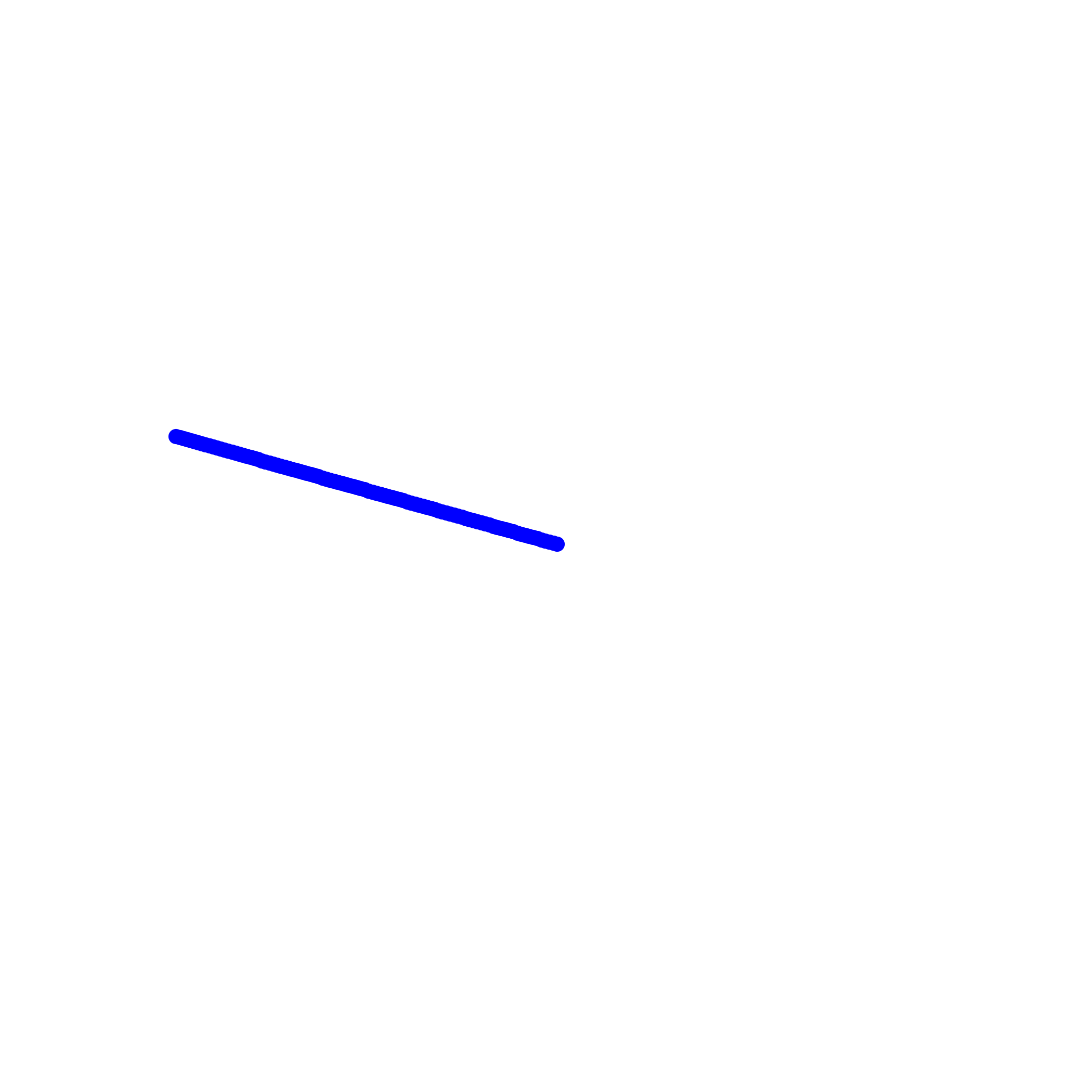}
	\caption{Reconstruction of a line segment from pairwise distances. The average 
	distance error is $9 \cdot 10^{-4}$.}
	\label{segment}
\end{figure}

In the example of a two-dimensional sphere we have two different setups.
In the first one, we took $k = 100$ points on a grid on unit sphere $S^2$, computed exact 
geodesic distances and applied the procedure with parameters $n = 3$ and 
$\varepsilon = 0.6$.
After that, we computed approximate pairwise geodesic distances over the resulting 
point cloud 
as it is done is Isomap.
As a result, we obtained $Err = 0.13$.
In the second setup, we had $k = 100$ points drawn independently from uniform 
distribution on the sphere and computed exact geodesic distances between them.
After that, we performed the embedding into $\R^3$ using our procedure with 
parameters $n = 3$ and $\varepsilon = 0.6$, and computed approximate pairwise 
geodesic distances between the embedded points again using the same method as in Isomap.
As a result, we obtained $Err = 0.16$.
The results of the sphere embedding are displayed in Figure~\ref{sphere}.

\begin{figure}[h]
	\noindent
	\includegraphics[width=\linewidth]{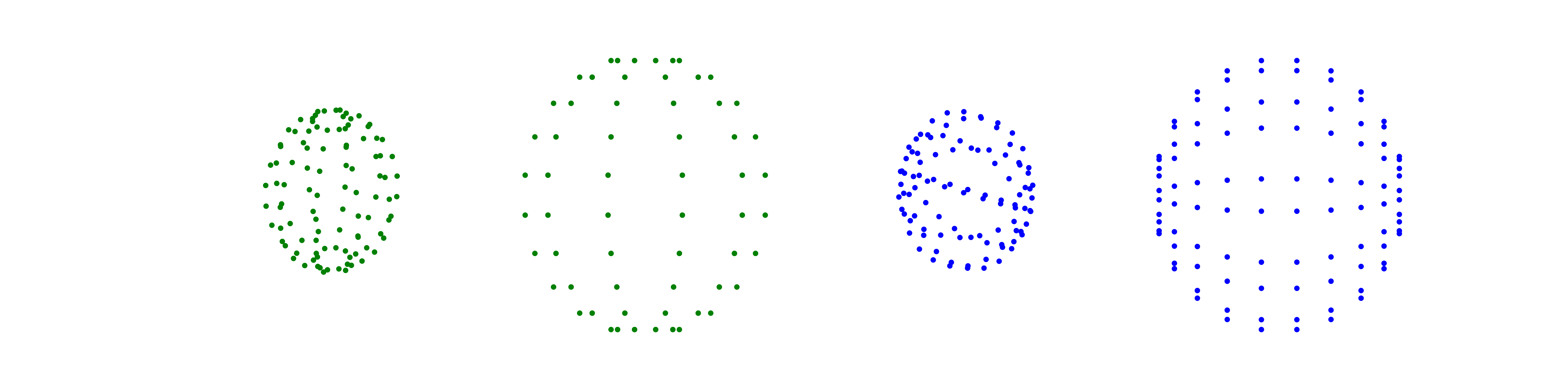}
	\includegraphics[width=\linewidth]{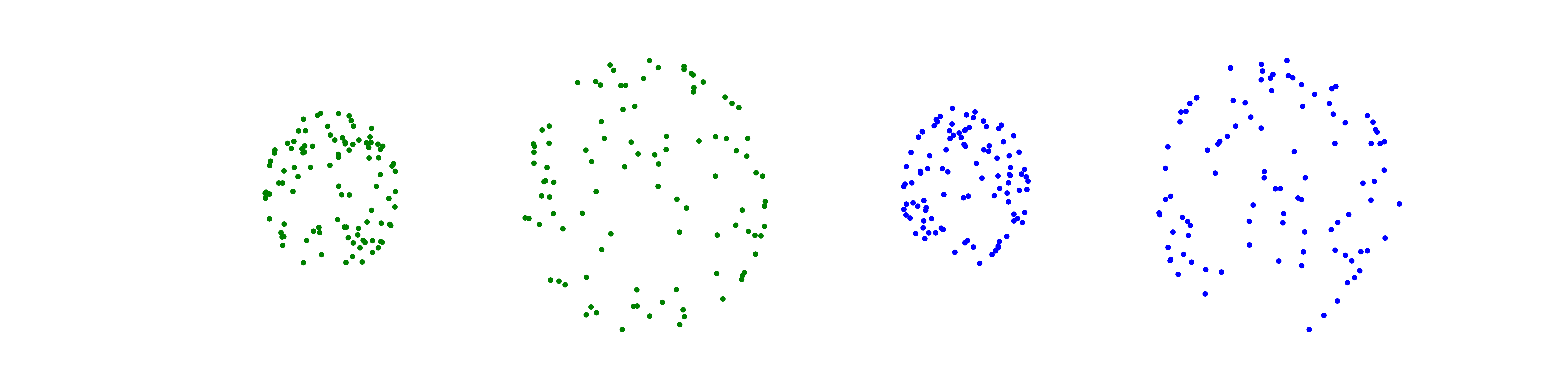}
	\caption{Reconstruction of a unit sphere from pairwise distances. Top line, columns 
		1 and 2: points on a grid on the sphere. Top line, columns 3 and 4: the recovered 
		points of the unit sphere. Bottom line, columns 1 and 2: points drawn from the 
		uniform distribution on unit sphere. Bottom line, columns 3 and 4: the recovered 
		points from approximate geodesic distances.}
	\label{sphere}
\end{figure}

Next, we carried out experiments on the widely known synthetic {\sc Swiss Roll} 
dataset from the Scikit-learn library in Python.
Here we also have two different setups.
In the first one, we generated $k=100$ points, computed pairwise Euclidean 
distances and applied the procedure with parameters $n = 3$ and $\varepsilon = 3$.
The results are shown in Figure~\ref{swiss_roll}.
After that we computed pairwise Euclidean distances between the recovered points.
The resulting average relative error~\eqref{err} was equal to $Err = 0.4$.
In the second setup, we computed exact geodesic distances between points and applied 
the procedure with parameters $n = 2$ and $\varepsilon = 3$.
After that, we computed the Euclidean distances between the embedded points.
The resulting average relative error \eqref{err} was equal to $Err = 0.28$.

\begin{figure}[h]
	\noindent
	\includegraphics[width=\linewidth]{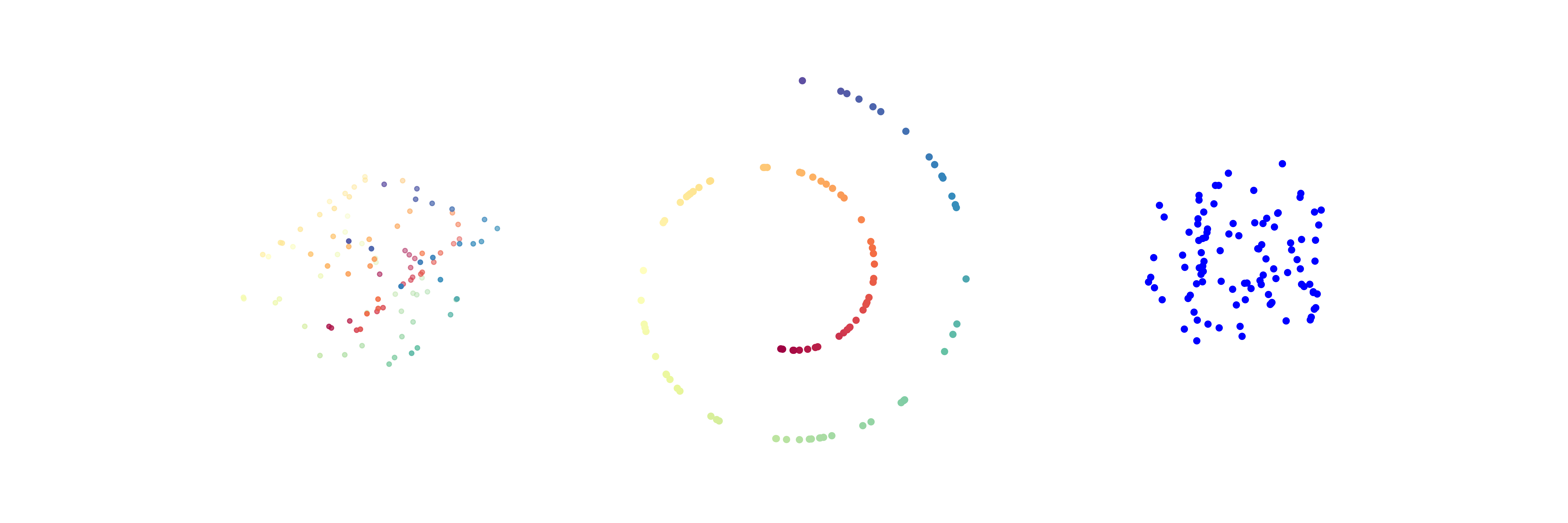}
	\includegraphics[width=\linewidth]{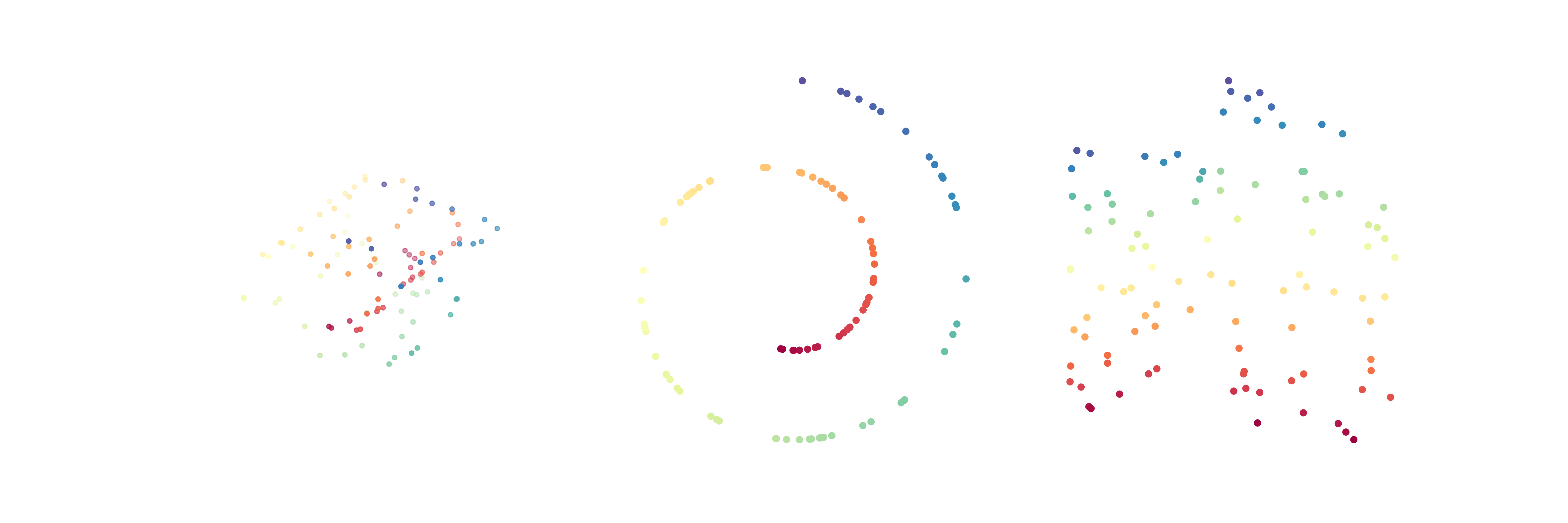}
	\caption{Reconstruction of the Swiss Roll from pairwise distances. Top line: sample 
		points in $\R^3$ (left), sample points, projection on the XOZ plane (center), 
		reconstructed Swiss Roll (right). Bottom line: sample points in $\R^3$ (left), sample 
		points, projection on the XOZ plane (center), embedding into $\R^2$ (right).}
	\label{swiss_roll}
\end{figure}

Finally, we provide the results of embedding Clifford torus into $\R^4$. Let us remind to a reader that Clifford torus is just the product of two circumferences $(1/\sqrt{2} S^1) \times (1/\sqrt{2} S^1)$, that is, the set of points
\[
	\left\{ \frac1{\sqrt{2}} (\cos \varphi, \sin \varphi, \cos \theta, \sin \theta) : 
	0 \leq \varphi < 2\pi, 0 \leq \theta < 2\pi \right\}. 
\]
We took $\sqrt{k} = 15$ equally spaced points on each circumference, so the total number of samples was equal to $k = 225$. After that, we applied the embedding procedure with parameters $d = 4$, $\varepsilon = 1.4$, and $C_2 = 0.2 / \pi$. The projections of the initial points and of the embedding into $\R^4$ are displayed in Figure \ref{torus}. The average relative distortion was equal to $Err = 0.21$. The distances between the embedded points were estimated in the same way as in Isomap.

\begin{figure}[h]
	\noindent\centering
	\includegraphics[width=\linewidth]{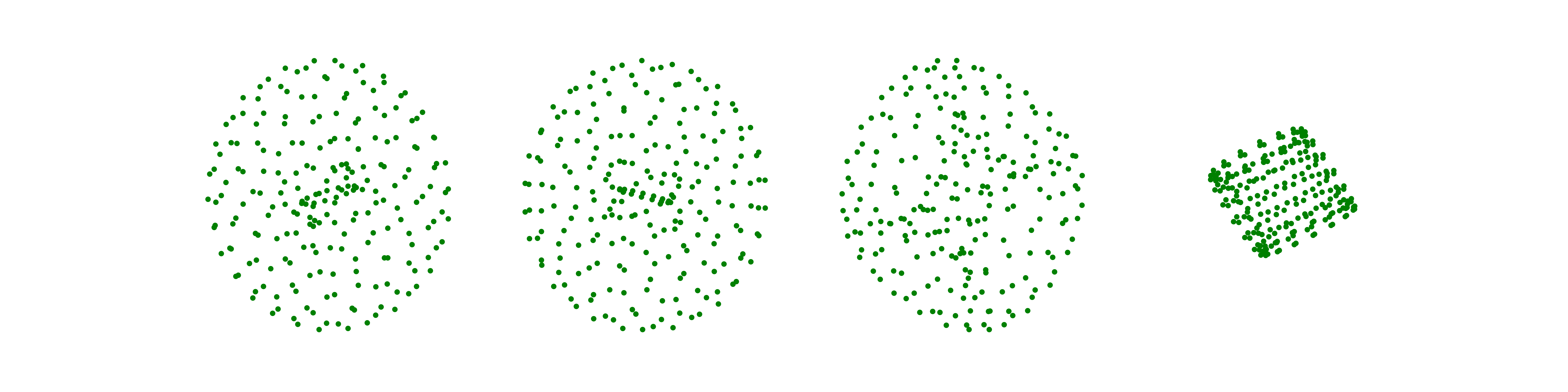}
	\includegraphics[width=\linewidth]{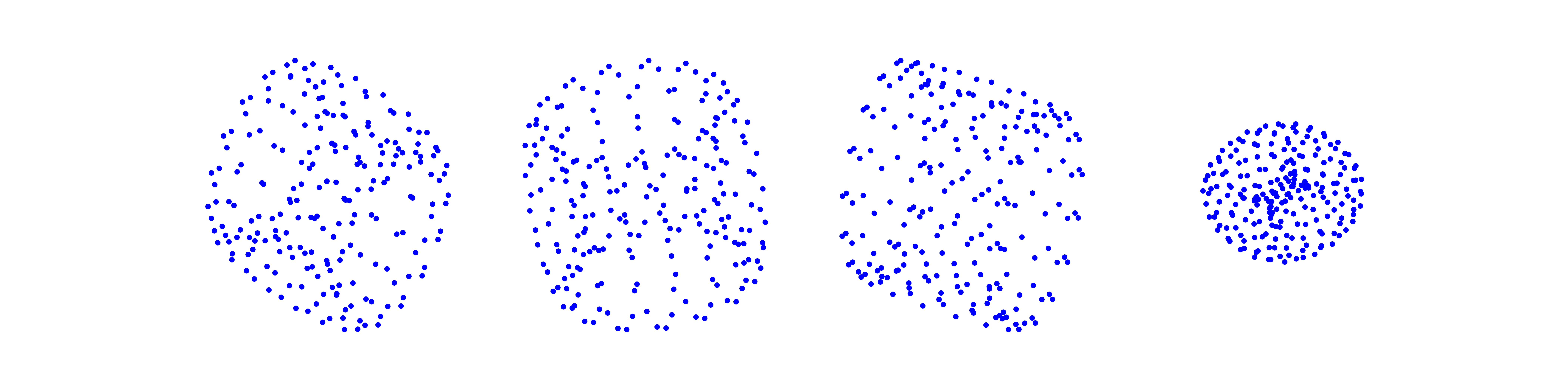}
	\caption{Reconstruction of Clifford torus from pairwise distances. Top line: projections of the original point cloud. Bottom line: projections of the points after embedding.}
	\label{torus}
\end{figure}

\appendix

\section{Auxiliary lemmata on sets of positive reach}

Throughout this section, $M\subset\R^n$ is a compact set, connected by rectifiable arcs,  
equipped with its intrinsic (geodesic) distance on $M$ denoted by $d_M$, and with $\Reach(M)=\alpha>0$.
In particular, this is true when $M$ is a $C^{1,1}$ smooth connected  compact Riemannian 
submanifold of $\R^n$.

\begin{lemma}
	\label{lm_locdistest1}
	Let $M$ be as above and let $u, v$ be any two points on $M$, such that 
	$d_M(u, v) < \alpha$.
	Then
	\[
	0\leq \dfrac{d_M(u,v)}{|u-v|}-1 \leq C_1 |u-v| 
	\]
	for all $\{u,v\}\subset M$ with $C_1 := \left(3\alpha\sqrt 3\right)^{-1}$.
\end{lemma}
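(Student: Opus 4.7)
The plan is to realize $d_M(u,v)$ as the length of a minimizing geodesic and to compare it with the Euclidean chord using the curvature bound supplied by the reach. Since $d_M(u,v) < \alpha = \Reach(M)$, there exists a unit-speed minimizing geodesic $\gamma \colon [0, L] \to M$, with $L := d_M(u,v)$, $\gamma(0) = u$, $\gamma(L) = v$. Viewed as a curve in $\R^n$, the geodesic equation forces the acceleration $\gamma''$ to be normal to $M$ and equal to the second fundamental form applied to the unit tangent. A classical consequence of the reach bound (going back to Federer) is that the second fundamental form of $M$ has norm at most $1/\alpha$, so $|\gamma''(t)| \leq 1/\alpha$ for all $t$, which upon integrating yields the Lipschitz estimate $|\gamma'(s) - \gamma'(t)| \leq |s - t|/\alpha$.

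The lower bound $d_M(u,v)/|u-v| \geq 1$ is immediate, as any curve in $M$ joining $u$ and $v$ has length at least $|u-v|$. For the upper bound, I exploit the identity
\[
|v - u|^2 = \Bigl|\int_0^L \gamma'(t)\, dt\Bigr|^2 = \int_0^L\int_0^L \gamma'(s) \cdot \gamma'(t)\, ds\, dt = L^2 - \frac{1}{2}\int_0^L\int_0^L |\gamma'(s) - \gamma'(t)|^2\, ds\, dt,
\]
where the last equality uses $|\gamma'| \equiv 1$. Applying the Lipschitz bound on $\gamma'$ and the elementary computation $\int_0^L\int_0^L (s-t)^2\, ds\, dt = L^4/6$, one arrives at $|v - u|^2 \geq L^2 - L^4/(12\alpha^2)$, which I rewrite as $(L - r)(L + r) \leq L^4/(12\alpha^2)$ with $r := |v-u|$. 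Since $L + r \geq L$, this yields the cleaner chord-arc estimate $L - r \leq L^3/(12\alpha^2)$.

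It remains to convert this bound into the desired form $L - r \leq r^2/(3\alpha\sqrt{3})$. Setting $\tau := L/\alpha \in (0, 1)$, the sufficient inequality $r^2 \geq L^3\sqrt{3}/(4\alpha)$ combined with the already obtained lower bound on $r^2$ reduces to the scalar statement $1 - \tau^2/12 \geq \tau\sqrt{3}/4$. The quadratic $\tau^2 + 3\sqrt{3}\,\tau - 12 = 0$ has positive root $\tau = \sqrt{3}$, so the inequality holds on $[0, \sqrt{3}]$, which comfortably contains the range $\tau < 1$ guaranteed by the hypothesis $L < \alpha$, closing the argument. I expect the only delicate step to be the appeal to the bound $|\gamma''| \leq 1/\alpha$ inherited from $\Reach(M) = \alpha$; the rest is elementary calculus together with a single quadratic inequality, and the specific constant $1/(3\sqrt{3})$ is exactly what the root $\tau = \sqrt{3}$ produces.
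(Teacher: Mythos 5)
Your proof is correct, but it takes a genuinely different route from the paper. The paper deduces the estimate as a two-line corollary of the sharp chord--arc inequality $d_M(u,v)\leq 2\alpha\arcsin\bigl(|u-v|/2\alpha\bigr)$, quoted as theorem~2.1 of the cited reference of Boissonnat--Lieutier--Wintraecken, followed by a Taylor bound $\arcsin t\leq t+\tfrac{2t^2}{3\sqrt3}$ for $t<1/2$; the only analysis is the estimate $f''(t)\leq 4/(3\sqrt3)$ on that range, and the constant $C_1=(3\alpha\sqrt3)^{-1}$ drops out directly. You instead bypass that citation and argue from first principles: the only external input is Federer's classical bound that $\Reach(M)=\alpha$ controls the second fundamental form by $1/\alpha$ (this is indeed standard --- it is, e.g., proposition~6.1 of the Niyogi--Smale--Weinberger paper the authors cite elsewhere --- so there is no gap, though you should attach a reference), after which the Lipschitz bound $|\gamma'(s)-\gamma'(t)|\leq|s-t|/\alpha$ on the unit tangent, the polarization identity $|v-u|^2=L^2-\tfrac12\int_0^L\int_0^L|\gamma'(s)-\gamma'(t)|^2\,ds\,dt$, and the computation $\int_0^L\int_0^L(s-t)^2\,ds\,dt=L^4/6$ give $r^2\geq L^2-L^4/(12\alpha^2)$ and hence $L-r\leq L^3/(12\alpha^2)$; your reduction to the quadratic $\tau^2+3\sqrt3\,\tau-12\leq 0$ with root $\tau=\sqrt3$ is verified correctly and recovers exactly the paper's constant. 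What each approach buys: the paper's proof is shorter and rests on a sharp inequality (attained on circles of radius $\alpha$), but treats that inequality as a black box; yours is self-contained modulo a more elementary curvature fact, and as a small bonus your quadratic shows the conclusion actually persists on the wider range $d_M(u,v)\leq\sqrt3\,\alpha$, not just $d_M(u,v)<\alpha$. The intermediate chord--arc bound $L-r\leq L^3/(12\alpha^2)$ you derive is slightly weaker pointwise than the $\arcsin$ inequality, but loses nothing for the stated estimate.
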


\begin{proof}
	Since $|u-v|\leq d_M(u, v) < \alpha$, then
	by theorem~1 of~\cite{blw18}  we have 
	\[
	\frac{d_M(u, v)}{2\alpha} \leq \arcsin \frac{|u - v|}{2\alpha}.
	\]
	Let $f(t) := \arcsin t$, $t < 0.5$.
	Since $f(0) = 0$, $f'(0) = 1$ and
	\[
	f''(t) = \frac{x}{(1 - x^2)^{3/2}} \leq \frac4{3\sqrt3},
	\]
	we have
	\[
	f(t) \leq t + \frac{2t^2}{3\sqrt3}.
	\]
	Thus,
	\[
	\frac{d_M(u, v) - |u - v|}{2\alpha} \leq \frac{|u - 
		v|^2}{6\alpha^2\sqrt3},
	\]
	which yields
	\[
	\frac{d_M(u, v)}{|u - v|} - 1 \leq \frac{|u - v|}{3\alpha\sqrt3}
	\]
	as claimed.
\end{proof}

\begin{lemma}\label{lm_locdistest2}
	Let $M$ be as above. Then
	\[
	\dfrac{|u-v|}{d_M(u,v)}\geq \bar C_2 
	\]
	for all $\{u,v\}\subset M$ and for some $\bar C_2>0$. 
	In particular, one can take $\bar C_2:=(2/\pi)
	\wedge (2\alpha/D)$, where $D$ stands for the intrinsic diameter of $M$.
\end{lemma}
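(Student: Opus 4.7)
I will exploit the same reach-based comparison result from \cite{blw18} (theorem~2.1) that appears in the proof of Lemma~\ref{lm_locdistest1}, but in its wider range of applicability $d_M(u,v)\leq \pi\alpha$. Rewriting it in the equivalent form
\[
|u-v| \;\geq\; 2\alpha \sin\!\bigl(d_M(u,v)/(2\alpha)\bigr),
\]
the argument splits into two regimes according to whether $d_M(u,v)$ exceeds $\pi\alpha$.

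In the first regime $d_M(u,v)\leq \pi\alpha$, set $t := d_M(u,v)/(2\alpha) \in (0,\pi/2]$, so the inequality reads $|u-v|/d_M(u,v)\geq \sin(t)/t$. Since $t\mapsto \sin(t)/t$ is monotone decreasing on $(0,\pi/2]$ with minimum $2/\pi$ attained at $t=\pi/2$, this yields $|u-v|/d_M(u,v) \geq 2/\pi$. In the second regime $d_M(u,v) > \pi\alpha$, I apply the very same inequality contrapositively: were $|u-v| < 2\alpha$, one would have $\arcsin(|u-v|/(2\alpha)) < \pi/2$, whence $d_M(u,v)\leq 2\alpha \arcsin(|u-v|/(2\alpha)) < \pi\alpha$, contradicting the hypothesis. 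Hence $|u-v|\geq 2\alpha$, and combining with the trivial bound $d_M(u,v)\leq D$ gives $|u-v|/d_M(u,v)\geq 2\alpha/D$.

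Taking the smaller of the two regime bounds delivers the claim with $C_2 = (2/\pi)\wedge (2\alpha/D)$, with continuity at the crossover $d_M(u,v)=\pi\alpha$ ensured since both bounds coincide at the value $2\alpha$. The main potential obstacle is the verification that theorem~2.1 of \cite{blw18} holds throughout the range $d_M(u,v)\leq \pi\alpha$: in Lemma~\ref{lm_locdistest1} it is only invoked under the more restrictive assumption $d_M(u,v) < \alpha$, while here I need it up to $d_M(u,v) \leq \pi\alpha$. This is the standard sharp range for sets of positive reach, and if the cited statement were too restrictive one could recover it by chaining along a minimizing geodesic of length at most $\pi\alpha$, using the fact that a curve starting at $u$ on a manifold of reach $\alpha$ cannot rejoin the ball $B_{2\alpha}(u)$ after having left it within arclength $\pi\alpha$, which corresponds geometrically to confinement on the osculating circle of radius $\alpha$ giving exactly the sine bound.
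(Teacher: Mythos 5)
Your proof is correct and takes essentially the same approach as the paper: both rest on theorem~2.1 of~\cite{blw18} together with the bound $\sin t/t\geq 2/\pi$ for $t\in(0,\pi/2]$ in the near regime, and on $|u-v|\geq 2\alpha$ combined with $d_M(u,v)\leq D$ in the far regime. The only cosmetic difference is that the paper splits on the Euclidean distance ($|u-v|<2\alpha$ versus $|u-v|\geq 2\alpha$), which matches the hypothesis of the cited theorem directly and thereby renders both your contrapositive step and your closing worry about the theorem's range of validity unnecessary.
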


\begin{proof}
	If $|u-v|<2\alpha$, then
	by theorem~1 from~\cite{blw18}, one has
	\[
	d_M(u,v)\leq 2\alpha\arcsin\dfrac{|u-v|}{2\alpha},
	\]
	or, equivalently,
	\[
	\dfrac{|u-v|}{d_M(u,v)}\geq \dfrac{\sin t}{t},\quad\mbox{where } t:= \arcsin \dfrac{|u-v|}{2\alpha}.
	\]
	Since $0\leq t< \pi/2$, then
	\[
	\dfrac{|u-v|}{d_M(u,v)}\geq \dfrac{2}{\pi}.
	\]
	In the remaining case
	$|u-v|\geq 2\alpha$, one has
	\[
	\dfrac{|u-v|}{d_M(u,v)}\geq \dfrac{2\alpha}{D},
	\] 
	since $d_M(u,v)\leq D$.
\end{proof}

\bibliographystyle{plain}
\def\cprime{$'$} \def\cprime{$'$} \def\cprime{$'$} \def\cprime{$'$}

\end{document}